\newtheorem{theorem}{Theorem}
\newtheorem{conjecture}[theorem]{Conjecture}
\newtheorem{example}[theorem]{Example}
\newtheorem{remark}[theorem]{Remark}
\newtheorem{question}[theorem]{Question}
\newtheorem{lemma}[theorem]{Lemma}
\newtheorem{proposition}[theorem]{Proposition}
\newtheorem{corollary}[theorem]{Corollary}
\newtheorem{definition}[theorem]{Definition}
\newcommand{\RR}{\mathbb{R}}
\newcommand{\NN}{\mathbb{N}}
\newcommand{\ZZ}{\mathbb{Z}}
\newcommand{\Tn}{\mathcal{U}_N}
\DeclareMathOperator*{\argmin}{arg\,min}
\DeclareMathOperator*{\argmax}{arg\,max}
\DeclareMathOperator*{\peak}{peak}
\DeclareMathOperator*{\valley}{valley}
\newcommand{\aaa}{{\bf a}}
\newcommand{\vv}{{\bf v}}
\newcommand{\uu}{{\bf u}}
\newcommand{\ww}{{\bf w}}
\begin{document}
	
	\title{Tropical Fermat-Weber points}
	\author{Bo Lin and Ruriko Yoshida}
	\date{}
	\maketitle

\begin{abstract}

In a metric space, the Fermat-Weber points of a sample are statistics to measure the central
tendency of the sample and it is well-known that
the Fermat-Weber point of a sample is not necessarily unique in the metric space.
We investigate the computation of Fermat-Weber points under the
tropical metric on the quotient 
space $\mathbb{R}^{n} \!/ \mathbb{R} {\bf 1}$ with a fixed $n \in \NN$,
motivated by its application to the space of equidistant phylogenetic
trees with $N$ leaves (in this case $n=\binom{N}{2}$) realized as the tropical linear space of all
ultrametrics. We show that the set of all tropical Fermat-Weber points of 
a finite sample is always a classical convex polytope, and we present a combinatorial formula for a key value 
associated to this set.
We identify conditions under which this set is a singleton. We apply numerical experiments to
analyze the set of the tropical Fermat-Weber points within a space of phylogenetic trees. We discuss 
the issues in the computation of the tropical Fermat-Weber points.

\end{abstract}

\section{Introduction}

The Fr\'{e}chet mean and the Fermat-Weber point of a sample are statistics to measure the central tendency of the sample \cite{Cieslikb,Nielsen}. For any metric space with a distance metric $d(\cdot, \cdot)$ between any two points, the Fr\'{e}chet population mean of a distribution $\nu$ is defined as follows:
\begin{equation*}
\mu = \argmin_{{\bf y}} \int d({\bf y},{\bf x})^2 d\nu({\bf x}), 
\end{equation*}
and thus, the Fr\'{e}chet sample mean of a sample ${\bf x}_1, \ldots, {\bf x}_m$ is defined as
\begin{equation*}
\hat{\mu}  = \argmin_{{\bf y}} \sum_{i=1}^m d({\bf y},{\bf x}_i)^2
\end{equation*}
The Fermat-Weber point of a distribution $\nu$ is defined as follows:
\begin{equation*}
\mu = \argmin_{{\bf y}} \int d({\bf y},{\bf x}) d\nu({\bf x}),
\end{equation*}
and thus,  the Fermat-Weber point of a sample is defined as follows:
\begin{equation*}
\hat{\mu}  = \argmin_{{\bf y}} \sum_{i=1}^m d({\bf y},{\bf x}_i).
\end{equation*}

In this paper, we consider tropical Fermat-Weber points on the quotient 
space $\mathbb{R}^{n} \!/ \mathbb{R} {\bf 1}$ for a fixed positive
integer $n$, that is, Fermat-Weber points on the quotient 
space $\mathbb{R}^{n} \!/ \mathbb{R} {\bf 1}$ under the 'tropical
metric' in the max-plus algebra. 
This tropical metric is called the {\em generalized Hilbert projective metric} 
\cite[\S 2.2]{AGNS}, \cite[\S 3.3]{CGQ}.  It is also known that the
geodesic between two points under this metric may not be unique. For more details on this
metric and tropical geometry, see \cite{sturmfels}.
 
Here we focus on the computational aspect of the Fermat-Weber
points in $\mathbb{R}^{n} \!/ \mathbb{R} {\bf 1}$ under the tropical
metric in the max-plus algebra, including characterizing the set of
all tropical Fermat-Weber points in $\mathbb{R}^{n} \!/ \mathbb{R}
{\bf 1}$.  More specifically, 
in Section \ref{key:sum}, we show an important
property of a tropical Fermat-Weber point of $m$ points ${\bf v}_1, \ldots , {\bf v}_m$
over $\RR^n/{\RR \bf 1}$ in Theorem \ref{thm:sum}. Then in Proposition \ref{prop:polytope} we show that the set of all tropical Fermat-Weber points of $m$ points ${\bf v}_1, \ldots , {\bf v}_m$
in $\RR^n/{\RR \bf 1}$ forms a classical convex polytope. Therefore, there are many cases when a set of $m$ points has infinitely many tropical Fermat-Weber points. In Section
\ref{sec:unique}, we investigate the condition when a random sample
of $m$ points has a unique tropical Fermat-Weber point.  If we 
consider the space of families of $m$ arbitrary points in $\RR^n/{\RR \bf 1}$ (which corresponds to $\RR^{m(n-1)}$),
the points forming an {\em essential set} (Definition
\ref{def:ess}) with a unique tropical Fermat-Weber point are contained
in a finite union of proper linear subspaces in $\RR^{m(n-1)}$ (Theorem
\ref{thm:lowdim}).  This theorem implies
that if we pick a random sample of $m$ points
${\bf v}_1,{\bf v}_2,\ldots,{\bf v}_m$ in $\RR^{m(n-1)}$, then with
probability $1$ we get either a set of
points such that one of them is already a unique tropical Fermat-Weber point of the others, or
a set of points that has infinitely many tropical Fermat-Weber points.

One finds an application of tropical Fermat-Weber points in
phylogenomics.  
In recent decades, the field of phylogenetics has found its
applications in analysis on genomic scale data (phylogenomics).  In particular, it has
been applied to analyze the relations between species and populations,
genome evolution, as well as evolutionary processes of 
speciation and molecular evolution. Today, since we can generate genomic data 
relatively cheaply and quickly, we encounter a new problem in the
sheer volume of genomic data and we lack analytical tools on such data 
(e.g. \cite{Betancur2013,Carling2008,Heled2013,Thompson2013,Yu2011b}). 
Lin et.~al \cite{LSTY} mentioned tropical Fermat-Weber points on $\Tn$, the treespace of
rooted equidistant phylogenetic trees with $N$ leaves as a possible statistical method
to summarize genome data sets. 
Therefore, in Section \ref{sec:tree},  we investigate the intersection between the set of tropical
Fermat-Weber points of the sample in $\mathbb{R}^{{N \choose 2}} \!/
\mathbb{R} {\bf 1}$ and $\Tn$ for small $N \in \NN$. We show by
experiments that it is very rare to obtain a unique tropical Fermat-Weber point
of an essential random sample over $\Tn$.  From our
experimental study, we conjecture that if an essential random
sample has a unique tropical Fermat-Weber point, then the
unique tropical Fermat-Weber point is the vector with all ones in its
coordinates. 

In Section \ref{sec:elli} we generalize the locus of the tropical Fermat-Weber
points of a sample of size $k$ to the $k$-ellipses under tropical
metric.  We end this paper with an open problem regarding the computation of the tropical Fermat-Weber points of a sample.

\section{The tropical Fermat-Weber point}\label{key:sum}

In this section, we define the tropical metric and derive some basic properties
of the tropical Fermat-Weber point. 

For ${\bf u}, {\bf v}\in \RR^n$ we define their distance as
\begin{equation}\label{equ:met}
d_{tr}({\bf u}, {\bf v})=\max_{1\le i<j\le n}\{|u_i-u_j-v_i+v_j|\}.
\end{equation}
In other words, let $\mathcal{D}_{\uu,\vv}=\{u_i-v_i|1\le i\le n\}$, then
\begin{equation}\label{equ:difset}
d_{tr}({\bf u}, {\bf v})=\max_{x,y\in \mathcal{D}_{\uu,\vv}}{(|x-y|)}=\max(\mathcal{D}_{\uu,\vv})-\min(\mathcal{D}_{\uu,\vv}).
\end{equation}
By definition $d_{tr}$ is reflexive. For any ${\bf u}, {\bf v}, {\bf w}\in \RR^n$, we have that
\begin{eqnarray*}
	d_{tr}({\bf u}, {\bf w})&=&\max_{1\le i<j\le n}\{|u_i-u_j-w_i+w_j|\} \\
	&=&\max_{1\le i<j\le n}\{|(u_i-u_j-v_i+v_j)+(v_i-v_j-w_i+w_j)|\} \\
	&\le &\max_{1\le i<j\le n}\{(|u_i-u_j-v_i+v_j|+|v_i-v_j-w_i+w_j|)\} \\
	&\le &\max_{1\le i<j\le n}\{|u_i-u_j-v_i+v_j|\}+\max_{1\le i<j\le n}\{|v_i-v_j-v_i+v_j|\}\\
	&=&d_{tr}({\bf u}, {\bf v})+d_{tr}({\bf v}, {\bf w}).
\end{eqnarray*}
Thus, $d_{tr}$ satisfies the triangle inequality. Note that $d_{tr}({\bf u}, {\bf v})=0$ if and only if ${\bf u}-{\bf
  v}$ is a scalar multiple of ${\bf 1}$, and for any scalar multiple
$c{\bf 1}$ with a constant $c \in \RR$, $d_{tr}({\bf u}+c{\bf 1},{\bf
  v})=d_{tr}({\bf u}, {\bf v})$. So $d_{tr}({\bf u}, {\bf v})=0$ if and only if ${\bf u}={\bf v}$ in the
quotient space $\RR^n/\RR{\bf 1}$. Then $d_{tr}$ is a \emph{metric} on $\RR^n/\RR{\bf 1}$. It is called the \emph{tropical
 metric} \cite{AGNS}.

\begin{remark}
	The metric $d_{tr}$ is invariant under vector addition in Euclidean space: for any ${\bf u}, {\bf v}, {\bf w}\in \RR^n/\RR{\bf 1}$, by (\ref{equ:difset}), we have $d_{tr}({\bf u}+{\bf w},{\bf v}+{\bf w})=d_{tr}({\bf u},{\bf v})$.
\end{remark}

Given vectors $\vv_1,\ldots, \vv_m\in \mathbb{R}^n/{\RR \bf 1}$, the set of their
\emph{tropical Fermat-Weber points} (if the context is clear, we 
simply use \emph{Fermat-Weber points}) is
\begin{equation}\label{equ:sum}
\argmin_{\uu\in \RR^n/{\RR \bf 1}} \mathop{\sum}_{i=1}^{m}{d_{tr}(\uu,\vv_{i})}.
\end{equation}

\begin{definition}
	For points $\vv_1,\vv_2,\ldots,\vv_m \in \RR^n/{\RR \bf 1}$, we define the minimal sum of distances from them as 
	\begin{equation}\label{equ:sumofdist}
	{\bf d}(\vv_1,\vv_2,\ldots,\vv_m)=\min_{\uu\in \RR^n/{\RR \bf 1}}{\mathop{\sum}_{i=1}^{m}{d_{tr}(\uu,\vv_{i})}}.
	\end{equation}
\end{definition}

Then ${\bf d}(\vv_1,\vv_2,\ldots,\vv_m)$ should be determined by the entries
of $\vv_1,\vv_2,\ldots,\vv_m$. However, at this point we do not know
whether ${\bf d}(\vv_1,\vv_2,\ldots,\vv_m)$ is well-defined, nor any explicit formulation of it. In addition, in order to find the set of Fermat-Weber points of $\vv_1,\vv_2,\ldots,\vv_m$, we need to know the value of ${\bf d} (\vv_1,\vv_2,\ldots,\vv_m)$. The following theorem gives a direct formula of ${\bf d} (\vv_1,\vv_2,\ldots,\vv_m)$.
\begin{theorem}\label{thm:sum}
	Let $M$ be an $m\times n$ matrix with real entries such that the row vectors are $\vv_1,\vv_2,\ldots,\vv_m$. Then
	\begin{equation}\label{equ:thm}
	{\bf d} (\vv_1,\vv_2,\ldots,\vv_m)=\max_{\sigma,\tau}{\left|\sum_{i=1}^{m}{M_{i,\sigma(i)}}-\sum_{i=1}^{m}{M_{i,\tau(i)}}\right|},
	\end{equation}
	where functions $\sigma,\tau: [m]\to [n]$ satisfy $\sigma([m])=\tau([m])$ as multisets.
\end{theorem}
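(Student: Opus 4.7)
The plan is to identify $\mathbf{d}(\vv_1,\dots,\vv_m)$ as the value of a linear program and recover the right-hand side of \eqref{equ:thm} as the value of its dual. Throughout, let $c_\sigma(l) := |\sigma^{-1}(l)|$ denote the multiplicity function of $\sigma\colon[m]\to[n]$; the condition $\sigma([m])=\tau([m])$ as multisets is exactly $c_\sigma=c_\tau$. Using \eqref{equ:difset}, for any $\uu\in\RR^n/\RR{\bf 1}$ one has
\[
\sum_{i=1}^m d_{tr}(\uu,\vv_i) \;=\; \max_{\sigma,\tau\colon[m]\to[n]} \sum_{i=1}^m\bigl(u_{\sigma(i)}-u_{\tau(i)}+M_{i,\tau(i)}-M_{i,\sigma(i)}\bigr),
\]
since the inner maxes over $(j,k)$ for each $i$ can be carried out independently. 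Writing $F(\uu;\sigma,\tau)$ for the right-hand expression, we have $\mathbf{d}(\vv_1,\dots,\vv_m)=\min_{\uu}\max_{\sigma,\tau}F(\uu;\sigma,\tau)$, a minimax of a function affine in $\uu$.

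The inequality $\ge$ in \eqref{equ:thm} is the easy half. For any admissible pair $(\sigma,\tau)$ with $c_\sigma=c_\tau$, the $\uu$-dependent part $\sum_i(u_{\sigma(i)}-u_{\tau(i)})=\sum_l u_l(c_\sigma(l)-c_\tau(l))$ vanishes identically, so for every $\uu$
\[
\sum_{i=1}^m d_{tr}(\uu,\vv_i)\;\ge\; F(\uu;\sigma,\tau)\;=\;\sum_{i=1}^m M_{i,\tau(i)}-\sum_{i=1}^m M_{i,\sigma(i)}.
\]
Swapping the roles of $\sigma$ and $\tau$ produces the same inequality with opposite sign, hence with absolute value on the right; taking the max over admissible pairs and the min over $\uu$ gives one direction of \eqref{equ:thm}.

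For the reverse inequality I swap the min and max via LP strong duality. The primal LP $\min\sum_i t_i$ subject to $t_i+u_k-u_j\ge v_{i,k}-v_{i,j}$ for all $i,j,k$ is feasible and bounded below by $0$, so its optimum is attained and equals $\mathbf{d}(\vv_1,\dots,\vv_m)$. Its dual has variables $y_{i,j,k}\ge 0$ satisfying $\sum_{j,k}y_{i,j,k}=1$ for each $i$ and $\sum_{i,k}(y_{i,l,k}-y_{i,k,l})=0$ for each $l$, with objective $\max\sum_{i,j,k}y_{i,j,k}(v_{i,k}-v_{i,j})$. Since this objective depends only on the marginals $\mu_i(j):=\sum_k y_{i,j,k}$ and $\nu_i(k):=\sum_j y_{i,j,k}$, the dual reduces to maximizing a linear functional over the polytope $P=\{(\mu,\nu):\mu_i,\nu_i\in\Delta_n,\ \sum_i\mu_i=\sum_i\nu_i\}$. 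The main obstacle is the integrality claim: every vertex of $P$ should have $\mu_i=\delta_{\sigma(i)}$, $\nu_i=\delta_{\tau(i)}$ for some integer-valued $\sigma,\tau$, and then the marginal-matching constraint forces $c_\sigma=c_\tau$. Granting this, the dual optimum is $\max\{\sum_i M_{i,\tau(i)}-\sum_i M_{i,\sigma(i)}:c_\sigma=c_\tau\}$, and by swap-symmetry this equals the right-hand side of \eqref{equ:thm}. Integrality should follow from total unimodularity of the marginal constraint matrix (a bipartite transportation-type network), or equivalently from a flow-decomposition argument writing any feasible $(\mu_i,\nu_i)_i$ as a convex combination of integer assignments.
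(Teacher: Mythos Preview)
Your approach is correct and is a genuinely different---and in many ways cleaner---packaging than the paper's. Both proofs share the easy direction (Lemma~\ref{lem:ine} in the paper), and both ultimately hinge on an integrality statement for the dual; the difference lies in how that integrality is obtained. The paper argues by Fourier--Motzkin elimination: any lower bound on $s$ is a nonnegative rational combination of the base inequalities, and after clearing denominators one obtains a family $\sigma_1,\dots,\sigma_r,\tau_1,\dots,\tau_r$ with $\sum_j \ww_{\sigma_j}=\sum_j \ww_{\tau_j}$. The combinatorial Lemma~\ref{lem:mat} (a Hall-type rearrangement) then rewrites this family so that each individual pair $(\sigma'_j,\tau'_j)$ already satisfies $c_{\sigma'_j}=c_{\tau'_j}$, whence each term is bounded by the right-hand side of \eqref{equ:thm}. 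Your route replaces this hand-built argument with standard LP duality and then reduces the dual to the polytope $P=\{(\mu,\nu):\mu_i,\nu_i\in\Delta_n,\ \sum_i\mu_i=\sum_i\nu_i\}$, whose vertices you claim are integral. Observe that $P$ is exactly a transshipment polytope: with source nodes $s_1,\dots,s_m$ of supply $1$, sink nodes $t_1,\dots,t_m$ of demand $1$, and intermediate nodes $c_1,\dots,c_n$, the variables $\mu_{i,j}$ and $\nu_{i,j}$ are arc flows $s_i\to c_j$ and $c_j\to t_i$, the simplex constraints are supply/demand, and $\sum_i\mu_i=\sum_i\nu_i$ is flow conservation at the $c_j$. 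The constraint matrix is therefore the node--arc incidence matrix of a directed graph, hence totally unimodular, and with integral right-hand side the vertices of $P$ are integral. This is precisely the step you flagged as ``should follow,'' and it does; it is worth stating explicitly, since it carries the whole argument. It is also worth noting that the paper's Lemma~\ref{lem:mat} is, in disguise, the same integrality fact: the Hall-matching step decomposes a fractional transportation plan into integer ones. Your formulation makes this structure transparent and avoids the ad~hoc induction, at the cost of invoking LP strong duality and total unimodularity as black boxes.
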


To prove this theorem, we need the following lemmas.
\begin{lemma}\label{lem:ine}
	The right hand side (RHS) of (\ref{equ:thm}) is bounded above
        by the left hand side (LHS) of (\ref{equ:thm}).
\end{lemma}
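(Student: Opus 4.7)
The plan is to unfold both sides: the LHS is a minimum over $\uu$ of a sum of tropical distances, so it suffices to prove that for \emph{every} $\uu \in \RR^n/\RR\mathbf{1}$ and every pair $\sigma,\tau : [m] \to [n]$ with $\sigma([m]) = \tau([m])$ as multisets,
\[
\left| \sum_{i=1}^m M_{i,\sigma(i)} - \sum_{i=1}^m M_{i,\tau(i)} \right| \le \sum_{i=1}^m d_{tr}(\uu, \vv_i).
\]
Taking the infimum over $\uu$ on the right then gives the claim.

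To bound a single signed sum $\sum_i (v_{i,\sigma(i)} - v_{i,\tau(i)})$, I would introduce $\uu$ artificially via the algebraic identity
\[
v_{i,\sigma(i)} - v_{i,\tau(i)} = \bigl[(v_{i,\sigma(i)} - u_{\sigma(i)}) - (v_{i,\tau(i)} - u_{\tau(i)})\bigr] + (u_{\sigma(i)} - u_{\tau(i)}).
\]
The bracketed term is a difference of two elements of $\mathcal{D}_{\vv_i,\uu}$, so by (\ref{equ:difset}) it is at most $d_{tr}(\uu,\vv_i)$ in absolute value. Summing over $i$, the ``correction'' contribution $\sum_i (u_{\sigma(i)} - u_{\tau(i)})$ vanishes precisely because $\sigma([m]) = \tau([m])$ as multisets, so the two sums $\sum_i u_{\sigma(i)}$ and $\sum_i u_{\tau(i)}$ are merely reorderings of each other. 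This yields $\sum_i (v_{i,\sigma(i)} - v_{i,\tau(i)}) \le \sum_i d_{tr}(\uu,\vv_i)$, and swapping the roles of $\sigma$ and $\tau$ gives the matching lower bound, producing the absolute value.

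The multiset condition on $\sigma,\tau$ is the crux of the argument: it is exactly what makes the $\uu$-dependent correction telescope to zero, so that the bound becomes intrinsic to the $\vv_i$. I do not expect any serious obstacle, since everything reduces to the elementary observation that $|x - y| \le \max \mathcal{D} - \min \mathcal{D}$ for $x,y \in \mathcal{D}$, combined with the cancellation from $\sigma([m]) = \tau([m])$; the only mild subtlety is remembering to handle both signs to get the absolute value on the RHS.
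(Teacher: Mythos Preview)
Your proof is correct and follows essentially the same approach as the paper's: both use the inequality $d_{tr}(\uu,\vv_i) \ge (v_{i,\sigma(i)} - u_{\sigma(i)}) - (v_{i,\tau(i)} - u_{\tau(i)})$, sum over $i$, and invoke the multiset condition $\sigma([m]) = \tau([m])$ to cancel the $\uu$-terms. The only cosmetic difference is that the paper fixes $\uu$ to be a Fermat-Weber point from the outset (so the sum of distances equals the LHS directly), whereas you prove the bound for every $\uu$ and then pass to the infimum; your version is marginally cleaner since at this stage the paper has not yet established that the infimum is attained.
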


\begin{proof}
	Let $\uu$ be a Fermat-Weber point of $\vv_1,\vv_2,\ldots,\vv_m$. Suppose $\sigma,\tau$ are functions with the same multiset of values. Since $\sigma$ and $\tau$ are symmetric, we may assume that 
	\begin{equation*}\sum_{i=1}^{m}{M_{i,\sigma(i)}}\ge \sum_{i=1}^{m}{M_{i,\tau(i)}}.\end{equation*}
	Now for $1\le i\le m$ we have
	\begin{equation*}d_{tr}(\uu,\vv_{i})\ge |(\vv_{i})_{\sigma(i)}-(\vv_{i})_{\tau(i)}-u_{\sigma(i)}+u_{\tau(i)}|\ge M_{i,\sigma(i)}-M_{i,\tau(i)}-u_{\sigma(i)}+u_{\tau(i)}.\end{equation*}
	Summing up over $1\le i\le m$, the LHS of the sum is ${\bf d} (\vv_1,\vv_2,\ldots,\vv_m)$. One part of the RHS of the sum is 
	\begin{equation*}\sum_{i=1}^{m}{M_{i,\sigma(i)}}-\sum_{i=1}^{m}{M_{i,\tau(i)}},\end{equation*}
	and the other part vanishes because $\sigma([m])=\tau([m])$. Hence ${\bf d} (\vv_1,\vv_2,\ldots,\vv_m)\ge \sum_{i=1}^{m}{M_{i,\sigma(i)}}-\sum_{i=1}^{m}{M_{i,\tau(i)}}$.
\end{proof}

\begin{lemma}\label{lem:mat}
	If $A$ and $B$ are two $m\times n$ matrices that have the same
        multiset of entries, then there exist $m\times n$ matrices $A'$ and $B'$ such that: 
	\begin{enumerate}[(i)]
		\item \label{con:row} for $1\le i\le m$, the entries of the $i$-th row of $A'$ and the entries of the $i$-th row of $B'$ form the same multiset; and
		\item \label{con:col} for $1\le j\le n$, the entries of the $j$-th column of $A'$ and the entries of the $j$-th column of $A$ form the same multiset, and the entries of the $j$-th column of $B'$ and the entries of the $j$-th column of $B$ form the same multiset.
	\end{enumerate}
\end{lemma}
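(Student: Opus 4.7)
The plan is to recast the existence of $A'$ and $B'$ as an edge-coloring problem on a suitable bipartite multigraph and then invoke K\"onig's edge-coloring theorem.

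Since $A$ and $B$ have the same multiset of entries, fix any value-preserving bijection $\psi$ from the position set $[m]\times[n]$ of $A$ to the position set $[m]\times[n]$ of $B$, so that $A_{ij} = B_{\psi(i,j)}$ for all $i,j$. Form a bipartite multigraph $G$ with vertex set $\{a_1,\ldots,a_n\}\cup\{b_1,\ldots,b_n\}$, where $a_j$ represents the $j$-th column of $A$ and $b_k$ the $k$-th column of $B$. For each $A$-position $(i,j)$, writing $\psi(i,j)=(i',j')$, add an edge between $a_j$ and $b_{j'}$ labeled by the common value $A_{ij}=B_{i'j'}$. The vertex $a_j$ then has degree $m$ (one edge per row in column $j$ of $A$), and $b_{j'}$ also has degree $m$ (its incident edges are in bijection, via $\psi^{-1}$, with the $m$ positions in column $j'$ of $B$), so $G$ is $m$-regular.

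By K\"onig's edge-coloring theorem for bipartite multigraphs, the $mn$ edges of $G$ partition into $m$ perfect matchings $M_1,\ldots,M_m$. I use $M_c$ to prescribe the $c$-th row of both $A'$ and $B'$: the unique edge of $M_c$ incident to $a_j$ came from some $A$-position $(i,j)$, and I set $A'_{c,j}:=A_{ij}$; symmetrically, the unique edge of $M_c$ incident to $b_{j'}$ came from some $A$-position whose $\psi$-image is a $B$-position $(i',j')$, and I set $B'_{c,j'}:=B_{i'j'}$.

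Both conditions then follow with little extra work. For condition (ii), the $m$ edges incident to $a_j$ are distributed one into each matching $M_c$, so as $c$ varies the row indices contributing to column $j$ of $A'$ permute $[m]$; thus column $j$ of $A'$ is a rearrangement of column $j$ of $A$, and the symmetric argument on the $b$-side handles $B'$. For condition (i), the entries placed in row $c$ of $A'$ and in row $c$ of $B'$ are both exactly the multiset of values labeling the edges of $M_c$, so they coincide. The substantive step is choosing the right encoding: once one recognizes that the problem reduces to an $m$-regular bipartite edge coloring, K\"onig's theorem delivers the conclusion, and the remaining verification is routine bookkeeping.
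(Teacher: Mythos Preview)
Your proof is correct and follows essentially the same strategy as the paper: both build the same $m$-regular bipartite multigraph on the column sets of $A$ and $B$, with one edge per entry, and then extract perfect matchings to fill in the rows of $A'$ and $B'$. The only difference is packaging: the paper invokes Hall's theorem to peel off one perfect matching at a time and proceeds by induction on $m$, whereas you invoke K\"onig's edge-coloring theorem to obtain all $m$ perfect matchings in a single stroke, which absorbs the induction into the cited result.
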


\begin{proof}
	If the entries of $A$ are not all distinct, then we can label the equal entries to distinguish them. So we may assume that both $A$ and $B$ have $mn$ distinct entries. Then we can replace \emph{multiset} in the statement by \emph{set}.
	
	We use induction on $m$. If $m=1$, we can take $A'=A$ and $B'=B$. Suppose Lemma \ref{lem:mat} is true when $m\le k$, we consider the case when $m=k+1$. If there exists $m\times n$ matrices $A''$ and $B''$ such that (\ref{con:col}) is true and (\ref{con:row}) is true for $i=1$, let $r_{A}$ and $r_{B}$ be the vector of first row in $A''$ and $B''$ respectively, and we denote
	\begin{equation*}
		A''= \begin{bmatrix}
		r_{A} \\
		A_{2}
		\end{bmatrix}, \,\, 
		B''= \begin{bmatrix}
		r_{B} \\
		B_{2}
		\end{bmatrix}.
	\end{equation*}
	Then we apply the induction hypothesis of $m=k$ to the matrices $A_2$ and $B_2$. Suppose we get new matrices $A'_2$ and $B'_2$ respectively, then we let
	\begin{equation*}
	A'= \begin{bmatrix}
	r_{A} \\
	A'_{2}
	\end{bmatrix}, \,\, 
	B'= \begin{bmatrix}
	r_{B} \\
	B'_{2}
	\end{bmatrix}.
	\end{equation*}
    So $A'$ and $B'$ satisfy both (\ref{con:row}) and (\ref{con:col}). Now it suffices to show that we can find such a pair of matrices $A''$ and $B''$. We denote by $s$ the set of entries in $r_{A}$ (and also the set of entries in $r_{B}$). Then the above claim is equivalent to the following statement: there exists a set $s$ with $|s|=n$ and $s$ has exactly one element in each column of $A$ and $B$.
	
	We construct a bipartite graph $G=(V,E)$, where the two parts of $V$ correspond to the columns of $A$ and the columns of $B$ respectively:
	\begin{equation*}V=\{a_1, a_2,\cdots, a_n,
          b_1, b_2,\cdots, b_n\}.\end{equation*}
	For each entry $x$ in the set of entries of $A,B$, if $x$ is in the $i$-th column of $A$ and in the $j$-th column of $B$, then we connect an edge between $a_i$ and $b_j$. So $|E|=mn$. Since each column has $m$ entries, the graph $G$ is $m$-regular. By Hall's Theorem \cite{Hall}, $G$ admits a perfect matching. Then we let $s$ be the set of $n$ elements corresponding to the edges in this perfect matching. The induction step is done. 
\end{proof}

For convenience, if $\sigma: [m]\to [n]$ is a function, then we view it as a vector in $[n]^{m}$ and we define a vector $\ww_{\sigma}\in \mathbb{N}^{n}$ as follows: the $i$-th entry of $\ww_{\sigma}$ is $|\sigma^{-1}(i)|$. So the sum of entries in $\ww_{\sigma}$ is always $m$. For example, if $m=3,n=5$ and $\sigma(1)=4,\sigma(2)=3,\sigma(3)=3$, then $\ww_{\sigma}=(0,0,2,1,0)$.

\begin{proof}[Proof of Theorem \ref{thm:sum}]
	Let $M$ be the value of the RHS in (\ref{equ:thm}). By Lemma \ref{lem:ine}, it suffices to show that there exists a point $\uu=(u_1,\ldots,u_n)\in \RR^n/{\RR \bf 1}$ such that
	
	\begin{equation*}
		\sum_{i=1}^{m}{d_{tr}(\uu,\vv_i)}=M.
	\end{equation*}
	
	For convenience, we introduce parameters $c_i$ to represent $d_{tr}(\uu,\vv_i)$ and another parameter $s$ to represent their sum. Then
	
	\begin{equation}\label{ine:1dis}
		c_i\ge u_j-u_k-M_{i,j}+M_{i,k} \quad \forall 1\le j,k\le n.
	\end{equation}
	
	and
	
	\begin{equation}\label{equ:M-defn}
		s=\sum_{i=1}^{m}{c_i}.
	\end{equation}
	
	Equivalently, we can eliminate the parameters $c_i$ and we get the following family of inequalities:
	\begin{equation}\label{equ:all}
	s\ge \sum_{i=1}^{m}{M_{i,\sigma(i)}}-\sum_{i=1}^{m}{M_{i,\tau(i)}}-{\bf u}\cdot \ww_{\tau}+{\bf u}\cdot \ww_{\sigma}, \forall \sigma,\tau \in [n]^{m}.
	\end{equation}
	
	In other words, (\ref{ine:1dis}) and (\ref{equ:M-defn}) are
        simultaneously feasible if and only if (\ref{equ:all}) is
        feasible. Now it suffices to show that there exists real
        numbers $u_1,\ldots,u_n,s$ satisfying (\ref{equ:all}) and
        $s\le M$. Note that by applying the Fourier-Motzkin Elimination \cite{Zie95}, one may get rid of the variables $u_i$ one at a time. After finite steps, the only variable remaining in the inequalities is $s$. Apparently $s$ can be arbitrarily large, so all remaining inequalities are of the form $s\ge l$, where the lower bound $l$ is a constant. Then $s$ could be the maximum of these lower bounds $c$, and it suffices to show that any lower bound of $s$ we obtain from the Fourier-Motzkin Elimination is at most $M$.
	
	Note that in each step of Fourier-Motzkin Elimination, we obtain a new inequality as a $\mathbb{Q_{}+}$-linear combination of existing inequalities. Therefore if $l$ is a lower bound, then $s\ge l$
	is a $\mathbb{Q_{}+}$-linear combination of the inequalities in (\ref{equ:all}). Multiplying by a positive integer we may assume that it's a $\mathbb{Z_{}+}$-linear combination, therefore we have $r\in \mathbb{Z_{}+}$ and functions $\sigma_1,\ldots,\sigma_r,\tau_1,\ldots,\tau_r: [m]\to [n]$ such that $s\ge l$ is equivalent to
	\begin{equation}\label{equ:com}
	rs\ge \sum_{j=1}^{r}{\left(\sum_{i=1}^{m}{M_{i,\sigma_{j}(i)}}-\sum_{i=1}^{m}{M_{i,\tau_{j}(i)}}-{\bf u}\cdot \ww_{\tau_{j}}+{\bf u}\cdot \ww_{\sigma_{j}}\right)}.
	\end{equation}
	So the RHS of (\ref{equ:com}) is the constant $rl$. Therefore
	\begin{equation}\label{equ:low}
	rl=\sum_{j=1}^{r}{\left(\sum_{i=1}^{m}{M_{i,\sigma_{j}(i)}}-\sum_{i=1}^{m}{M_{i,\tau_{j}(i)}}\right)},
	\end{equation}
	and
	\begin{equation}\label{equ:inn}
	\sum_{j=1}^{r}{\ww_{\sigma_{j}}}=\sum_{j=1}^{r}{\ww_{\tau_{j}}}.
	\end{equation}
	
	Now let $A$ and $B$ be two matrices with the same size $r\times m$, with $A_{j,i}=\sigma_{j}(i)$ and $B_{j,i}=\tau_{j}(i)$ for $1\le j\le r, 1\le i\le m$. Then the multiset of their entries are equal because of (\ref{equ:inn}). By Lemma \ref{lem:mat} we can obtain matrices $A'$ and $B'$ satisfying the conditions in (\ref{con:row}) and (\ref{con:col}). For $1\le j\le r$, we let $\sigma'_{j}$ and $\tau'_{j}$ be functions mapping from $[m]$ to $[n]$ such that for $1\le i\le m$, $\sigma'_{j}(i)=A'_{j,i}$ and $\tau'_{j}(i)=B'_{j,i}$
    Then the condition (\ref{con:row}) implies that 
	\begin{equation*}\ww_{\sigma'_{j}}=\ww_{\tau'_{j}} \quad \forall 1\le j\le r.\end{equation*}
	The condition (\ref{con:col}) implies that for each $1\le i\le m$, the multisets $\{\sigma_{j}(i)|1\le j\le r\}$ and $\{\sigma'_{j}(i)|1\le j\le r\}$ are equal and the multisets $\{\tau_{j}(i)|1\le j\le r\}$ and $\{\tau'_{j}(i)|1\le j\le r\}$ are equal. Then
	\begin{equation}\label{equ:sigma}
	\sum_{j=1}^{r}{\sum_{i=1}^{m}{M_{i,\sigma_{j}(i)}}}=\sum_{j=1}^{r}{\sum_{i=1}^{m}{M_{i,\sigma'_{j}(i)}}}
	\end{equation}
	because each entry of $M$ is added by the same number of times in both sides of (\ref{equ:sigma}). Similarly
	\begin{equation}\label{equ:tau}
		\sum_{j=1}^{r}{\sum_{i=1}^{m}{M_{i,\tau_{j}(i)}}}=\sum_{j=1}^{r}{\sum_{i=1}^{m}{M_{i,\tau'_{j}(i)}}}.
	\end{equation}
	Then the inequality (\ref{equ:com}) is equivalent to
	\begin{equation}\label{equ:std}
	rs\ge \sum_{j=1}^{r}{(\sum_{i=1}^{m}{M_{i,\sigma'_{j}(i)}}-\sum_{i=1}^{m}{M_{i,\tau'_{j}(i)}}-{\bf u}\cdot \vv_{\tau'_{j}}+{\bf u}\cdot \vv_{\sigma'_{j}})}.
	\end{equation}
	Now for each $1\le j\le r$, 
	\begin{equation*}\sum_{i=1}^{m}{M_{i,\sigma'_{j}(i)}}-\sum_{i=1}^{m}{M_{i,\tau'_{j}(i)}}-{\bf u}\cdot \vv_{\tau'_{j}}+{\bf u}\cdot \vv_{\sigma'_{j}}=\sum_{i=1}^{m}{M_{i,\sigma'_{j}(i)}}-\sum_{i=1}^{m}{M_{i,\tau'_{j}(i)}}.\end{equation*}
	In addition $\sigma'_j$ and $\tau'_j$ have the same multiset
        of values. Then by the definition of $M$, 
	\begin{equation*}M\ge \sum_{i=1}^{m}{M_{i,\sigma'_{j}(i)}}-\sum_{i=1}^{m}{M_{i,\tau'_{j}(i)}}.\end{equation*}
	We sum over $1\le j\le r$, then
	\begin{equation*}rl = \sum_{j=1}^{r}{(\sum_{i=1}^{m}{M_{i,\sigma'_{j}(i)}}-\sum_{i=1}^{m}{M_{i,\tau'_{j}(i)}})} \le \sum_{j=1}^{r}{M}=rM,\end{equation*}
	hence $l\le M$.
	So $M$ is the greatest possible lower bound of $s$, which means $s=M$ would make the system of linear inequalities feasible. So ${\bf d} (v_1,v_2,\ldots,v_m)=M$.
\end{proof}

\begin{proposition}\cite[Proposition 6.1]{LSTY}\label{prop:polytope}
	Let $\vv_1, \vv_2,\ldots,\vv_m$ be points in $\RR^n/{\RR \bf 1}$. The set of 
	their Fermat-Weber points is a classical convex polytope in $\RR^{n-1}\simeq \RR^n/{\RR \bf 1}$.
\end{proposition}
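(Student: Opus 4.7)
The plan is to exhibit the Fermat-Weber set as a bounded intersection of finitely many closed half-spaces in $\RR^n/\RR{\bf 1} \cong \RR^{n-1}$. The function $f(\uu) := \sum_{i=1}^m d_{tr}(\uu, \vv_i)$ is a sum of maxima of absolute values of affine functions of $\uu$, so it is convex and piecewise linear on the quotient, and its minimum value is $\mathbf{d}(\vv_1, \ldots, \vv_m)$ by definition.

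First I would extract an explicit polyhedral description from the proof of Theorem~\ref{thm:sum}. The Fourier-Motzkin reduction used there establishes that $f(\uu) \le s$ holds if and only if, for every pair of functions $\sigma, \tau \in [n]^m$,
\begin{equation*}
s \ge \sum_{i=1}^m M_{i,\sigma(i)} - \sum_{i=1}^m M_{i,\tau(i)} + \uu\cdot(\ww_\sigma - \ww_\tau).
\end{equation*}
Substituting the value $s = \mathbf{d}(\vv_1, \ldots, \vv_m)$ turns this into a finite list of linear inequalities in $\uu$, and a point $\uu$ is a Fermat-Weber point precisely when all of them hold. Hence the Fermat-Weber set is a convex polyhedron in $\RR^n/\RR{\bf 1}$.

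To upgrade this polyhedron to a polytope I would prove coercivity of $f$ on the quotient. Normalize each representative by the affine slice $u_1 = 0$. For $2 \le j \le n$, applying inequality (\ref{ine:1dis}) with the index pair $(j, 1)$ gives $d_{tr}(\uu, \vv_i) \ge |u_j - M_{i,j} + M_{i,1}|$ for each $i$, and summing over $i$ yields
\begin{equation*}
f(\uu) \ge m|u_j| - \sum_{i=1}^m |M_{i,j} - M_{i,1}|.
\end{equation*}
On the Fermat-Weber set $f(\uu) = \mathbf{d}(\vv_1,\ldots,\vv_m)$, so every $|u_j|$ is bounded above by a constant that depends only on the sample. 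The polyhedron therefore lies inside a compact box of $\RR^{n-1}$ and is a classical convex polytope.

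The main obstacle is really just the bookkeeping across the quotient: one has to verify that the polyhedral inequalities and the coercivity estimate are invariant under the $\RR{\bf 1}$ action, so that the description is well-defined after the choice of slice representative. Once this is checked, the polyhedral structure follows directly from Theorem~\ref{thm:sum} via (\ref{equ:all}), and boundedness is immediate from the one-line coercivity estimate above.
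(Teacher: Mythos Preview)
Your proposal is correct and follows essentially the same route as the paper: describe the Fermat-Weber set by the finite family of linear inequalities obtained after eliminating the auxiliary distances $c_i$, then pass to the slice $u_1=0$ and bound the remaining coordinates. The paper writes these inequalities directly as (\ref{ine:lin}) rather than via (\ref{equ:all}), and asserts boundedness in one line without the explicit coercivity estimate you supply, but the content is the same. One small inaccuracy: in the proof of Theorem~\ref{thm:sum} the Fourier--Motzkin step eliminates the variables $u_j$, not the $c_i$; the passage from (\ref{ine:1dis})--(\ref{equ:M-defn}) to (\ref{equ:all}) that you invoke is just the direct summation over $i$ of a chosen inequality for each $c_i$.
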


\begin{proof}[Proof]
	Let ${\bf x}=(x_1,\ldots,x_n)$ be a point in $\RR^n/{\RR \bf 1}$. Then ${\bf x}$ is a Fermat-Weber point of $\vv_1, \vv_2,\ldots,\vv_m$ if and only if for all choices of indices $j_i,k_i\in [n], 1\le i\le m$,
	\begin{equation}\label{ine:lin}
		\sum_{i=1}^{m}{(x_{j_i}-x_{k_i}+v_{i,k_i}-v_{i,j_i})}\le {\bf d} (\vv_1, \vv_2,\ldots,\vv_m).
	\end{equation}
	Then the set is a polyhedron in $\RR^n$. Finally in $\RR^n/{\RR \bf 1}$ we may assume $x_1=0$, and thus $x_i$ is bounded for $2\le i\le n$.
\end{proof}

\begin{example}\label{example:threepoints}
	The polytope of the following three points in $\RR^3/{\RR \bf 1} \simeq \RR^2$
	\[(0, 0, 0), (0, 3, 1), (0, 2, 5)\]
	is the triangle with vertices 
	\[(0,1,1),(0,2,1),(0, 2,2).\]
In Figure \ref{fig:1}, we draw the coordinates $x_2$ and $x_3$ since
the first coordinate $x_1 = 0$.
	\begin{figure}[!h]
		\centering
		\begin{tikzpicture}
			\draw [black,fill=lightgray] (1,1) -- (2,1) -- (2,2) -- (1,1);
			\filldraw [black] (1,1) circle (1pt);
			\filldraw [black] (2,1) circle (1pt);
			\filldraw [black] (2,2) circle (1pt);
			\filldraw [black] (0,0) circle (2pt);
			\filldraw [black] (3,1) circle (2pt);
			\filldraw [black] (2,5) circle (2pt);
			\node [right] at (0,0) {$(0,0)$};
			\node [right] at (3,1) {$(3,1)$};
			\node [below] at (2,5) {$(2,5)$};
		\end{tikzpicture}
		\caption{The Fermat-Weber points of three points in Example \ref{example:threepoints} is a closed triangle (blue).}\label{fig:1}
	\end{figure}
\end{example}

\section{Uniqueness of a Fermat-Weber point under the tropical metric}\label{sec:unique}

In the previous section we have shown that in some cases,
there are infinitely many Fermat-Weber points of a given set of $m$
points in $\RR^n/{\RR \bf 1}$. But how often does this case happen?  In this section we investigate conditions on the set of points in
$\RR^n/{\RR \bf 1}$ that has a unique Fermat-Weber point in $\RR^n/{\RR \bf 1}$,
i.e., we study when a random sample of $m$ points in $\RR^n/{\RR \bf 1}$ has a
unique Fermat-Weber point in $\RR^n/{\RR \bf 1}$.

\begin{lemma}\label{lem:psuni}
	Let $\vv_1,\vv_2,\ldots,\vv_m$ be points in $\RR^n/{\RR \bf 1}$ and $\vv_0$ be a Fermat-Weber point of them. Then $\vv_0,\vv_1,\ldots,\vv_m$ have a unique Fermat-Weber point, which is $\vv_0$.
\end{lemma}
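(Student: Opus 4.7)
The plan is to compare, for each candidate point $\uu \in \RR^n/{\RR \bf 1}$, the sum of distances from $\uu$ to the enlarged sample $\vv_0,\vv_1,\ldots,\vv_m$ against the sum for the choice $\uu = \vv_0$, and exploit the extra summand $d_{tr}(\uu,\vv_0)$ introduced by adjoining $\vv_0$ to the sample. Writing $S := {\bf d}(\vv_1,\ldots,\vv_m) = \sum_{i=1}^m d_{tr}(\vv_0,\vv_i)$, which holds by hypothesis that $\vv_0$ is a Fermat-Weber point of the original sample, the sum of distances from $\vv_0$ to the enlarged sample is exactly $0 + S = S$.

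Next, for an arbitrary $\uu \in \RR^n/{\RR \bf 1}$, I would split the sum of distances to the enlarged sample as
\begin{equation*}
d_{tr}(\uu,\vv_0) + \sum_{i=1}^{m} d_{tr}(\uu,\vv_i).
\end{equation*}
Since $\vv_0$ minimizes $\sum_{i=1}^m d_{tr}(\cdot,\vv_i)$ by hypothesis, the second summand is at least $S$. Hence the full sum is at least $d_{tr}(\uu,\vv_0) + S \ge S$, with equality only when $d_{tr}(\uu,\vv_0)=0$. Because $d_{tr}$ is a genuine metric on the quotient space $\RR^n/{\RR \bf 1}$ (as established earlier in Section \ref{key:sum}), this forces $\uu = \vv_0$ in $\RR^n/{\RR \bf 1}$. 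This simultaneously shows that ${\bf d}(\vv_0,\vv_1,\ldots,\vv_m) = S$ and that $\vv_0$ is the unique Fermat-Weber point of the enlarged sample.

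There is essentially no obstacle here: the lemma is a short consequence of the triangle-style separation argument, together with the fact (already recorded in Section \ref{key:sum}) that $d_{tr}$ is positive definite on $\RR^n/{\RR \bf 1}$. No appeal to Theorem \ref{thm:sum} or Proposition \ref{prop:polytope} is needed, and the argument does not use any structural information about the set of Fermat-Weber points beyond the defining minimization property.
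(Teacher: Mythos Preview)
Your proof is correct and follows essentially the same approach as the paper: both arguments split the sum over the enlarged sample into $d_{tr}(\uu,\vv_0)$ plus $\sum_{i=1}^m d_{tr}(\uu,\vv_i)$, bound the latter below by $S$ using the Fermat-Weber property of $\vv_0$, and invoke positive definiteness of $d_{tr}$ to force $\uu=\vv_0$ for equality.
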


\begin{proof}
	For any point ${\bf x}\in \RR^n/{\RR \bf 1}$, suppose ${\bf x}$ and $\vv_0$ are not the same point in $\RR^n/{\RR \bf 1}$. Then we have
	\begin{equation}
	d_{tr}({\bf x},\vv_0)>0=d_{tr}(\vv_0,\vv_0).
	\end{equation}
	Since $\vv_0$ is a Fermat-Weber point of $\vv_1,\vv_2,\ldots,\vv_m$, we have
	\begin{equation}
	\mathop{\sum}_{i=1}^{m}{d_{tr}({\bf x},\vv_{i})}\ge \mathop{\sum}_{i=1}^{m}{d_{tr}(\vv_0,\vv_{i})}.
	\end{equation}
	So
	\begin{equation}
	\mathop{\sum}_{i=0}^{m}{d_{tr}({\bf x},\vv_{i})}>\mathop{\sum}_{i=0}^{m}{d_{tr}(\vv_0,\vv_{i})}.
	\end{equation}
	Hence, by definition, $\vv_0$ is the unique Fermat-Weber point in $\RR^n/{\RR \bf 1}$.
\end{proof}

The situation in Lemma \ref{lem:psuni} is not desirable, because we don't know whether $\vv_1,\vv_2,\ldots,\vv_m$ have a unique Fermat-Weber point in $\RR^n/{\RR \bf 1}$. So we introduce the following definition.

\begin{definition}\label{def:ess}
	Let $S=\{\vv_1,\vv_2,\ldots,\vv_m\}$ be a set of points in $\RR^n/{\RR \bf 1}$. The set $S$ is \emph{essential} if for $1\le i\le m$, the point $\vv_i$ is not a Fermat-Weber point of the points in $S-\{\vv_i\}$.
\end{definition}

Now we consider the following question: in $\mathbb{R}^n/{\RR \bf 1}$, what is
the smallest integer $u(n)$ such that there exist an essential set of $u(n)$ points with a unique Fermat-Weber point in $\RR^n/{\RR \bf 1}$?

\begin{proposition}\label{prop:un}
	For $n\ge 3$, $u(n)\le n$.
\end{proposition}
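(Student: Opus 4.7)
The plan is to exhibit an explicit essential set of $n$ points in $\RR^n/\RR{\bf 1}$ whose unique Fermat-Weber point is the origin ${\bf 0}$. My candidate is $\vv_i = {\bf e}_i$ for $1\le i\le n$, where ${\bf e}_i$ is the $i$-th standard basis vector of $\RR^n$; thus the matrix $M$ of Theorem \ref{thm:sum} is the identity $I_n$.

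First I would compute ${\bf d}(\vv_1,\ldots,\vv_n)$ via Theorem \ref{thm:sum}. Since $\sum_i M_{i,\sigma(i)}$ equals the number of fixed points of $\sigma\colon [n]\to[n]$, the right-hand side of (\ref{equ:thm}) is bounded above by $n$, and equality is attained by taking $\sigma=\mathrm{id}$ and $\tau$ any derangement of $[n]$ (which exists since $n\ge 3$). Hence ${\bf d}(\vv_1,\ldots,\vv_n)=n$. A direct calculation gives $d_{tr}({\bf 0},{\bf e}_i)=1$ for each $i$, so $\sum_{i=1}^n d_{tr}({\bf 0},{\bf e}_i)=n$ and ${\bf 0}$ is a Fermat-Weber point. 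Essentiality reduces to a one-line comparison: $d_{tr}({\bf e}_i,{\bf e}_j)=2$ when $i\ne j$, so $\sum_{j\ne i}d_{tr}({\bf e}_i,{\bf e}_j)=2(n-1)$, while $\sum_{j\ne i}d_{tr}({\bf 0},{\bf e}_j)=n-1$, and hence ${\bf e}_i$ cannot be a Fermat-Weber point of $\{{\bf e}_j:j\ne i\}$.

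The heart of the argument is uniqueness, which I would establish from the polyhedral description in Proposition \ref{prop:polytope}. Writing a candidate Fermat-Weber point as ${\bf x}=(x_1,\ldots,x_n)$, my aim is to extract from (\ref{ine:lin}) the inequality $x_q\le x_p$ for every ordered pair $p\ne q$ in $[n]$; together these force $x_1=\cdots=x_n$, i.e., ${\bf x}={\bf 0}$ in $\RR^n/\RR{\bf 1}$. For fixed $p\ne q$, I would choose $k_i=i$ for every $i$, set $j_p=q$, and take $j$ restricted to $[n]\setminus\{p\}$ to be a derangement of that set. Then $j_i\ne i$ for every $i$, the multiset $\{j_1,\ldots,j_n\}$ equals $[n]$ with one $p$ replaced by one $q$, and substituting into (\ref{ine:lin}) collapses the inequality to $(x_q-x_p)+n\le n$, i.e., $x_q\le x_p$.

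The main obstacle is precisely this construction of the index data $(j,k)$; the crucial ingredient is the existence of a derangement on an $(n-1)$-element set, which requires $n-1\ge 2$. This is exactly where the hypothesis $n\ge 3$ enters the argument, and it is consistent with the fact that in $\RR^2/\RR{\bf 1}\simeq\RR$ any pair of distinct points has a whole interval of Fermat-Weber points, so the construction genuinely must fail for $n=2$.
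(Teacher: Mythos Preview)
Your proof is correct and takes a genuinely different route from the paper's. The paper exhibits, for $n\ge 4$, a circulant $n\times n$ matrix with entries in $\{-1,0,1\}$ (rows $\vv_i$ have two consecutive $1$'s followed by two consecutive $-1$'s, cyclically) and verifies uniqueness by a direct distance estimate: each $d_{tr}(\vv_i,\aaa)$ is bounded below by $2+\max\{a_{i+2},a_{i+3}\}-\min\{a_i,a_{i+1}\}$, and summing forces all $a_i$ equal. The case $n=3$ is handled by a separate ad~hoc example. Essentiality is deduced from Lemma~\ref{lem:psuni}: since ${\bf 0}$ is the unique Fermat--Weber point and no $\vv_i$ equals ${\bf 0}$, no $\vv_i$ can be a Fermat--Weber point of the others.

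Your construction with the standard basis vectors is simpler and uniform across all $n\ge 3$; the price is that uniqueness is not read off from a single telescoping estimate but from the polyhedral description (\ref{ine:lin}) together with Theorem~\ref{thm:sum}. The device of choosing $k_i=i$ and a derangement on $[n]\setminus\{p\}$ is clean, and your observation that this derangement exists precisely when $n-1\ge 2$ pinpoints where the hypothesis $n\ge 3$ is used. The paper's argument is more self-contained (it does not invoke Theorem~\ref{thm:sum} or Proposition~\ref{prop:polytope}), while yours exploits the machinery already in place and avoids the case split.
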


\begin{proof}
	First we suppose $n\ge 4$. Then we claim that the row vectors in the following $n\times n$ matrix $M$ represent $n$ points $\vv_1,\cdots,\vv_n$ that form an essential set and have a unique Fermat-Weber point in $\RR^n/{\RR \bf 1}$. 
	\begin{equation*}
	M_{i,j}=\begin{cases}
	1, &\text{ if } j-i\equiv 0,1 \mod n;\\
	-1, &\text{ if } j-i\equiv 2,3 \mod n;\\
	0, &\text{ otherwise}.
	\end{cases}
	\end{equation*}
	
	Note that for $1\le i\le n$ we have	$d_{tr}(\vv_i,{\bf 0})=1-(-1)=2$. Thus,
	\begin{equation}\label{equ:ori}
	\mathop{\sum}_{i=1}^{n}{d_{tr}(\vv_i,{\bf 0})}=2n.
	\end{equation} 
	
	Now suppose ${\aaa}=(a_1,\cdots,a_n)\in \RR^n/{\RR \bf 1}$ is a Fermat-Weber point of $\vv_1,\cdots,\vv_n$. For convenience we denote that $a_{i+n}=a_{i}$ for all $i$. By (\ref{equ:difset}), for $1\le i\le n$ we have
	\begin{equation}\label{equ:dif}
	d_{tr}(\aaa,\vv_i)=\max_{1\le j\le n}\{a_j-M_{i,j}\}-\min_{1\le j\le n}\{a_j-M_{i,j}\}.
	\end{equation}
	Then for $1\le i\le n$, 
	note that the $i$-th and $(i+1)$-th coordinates of $\vv_i$ are $1$. Thus, 
	$$\max_{1\le j\le n}\{a_j-M_{i,j}\}\ge 1-\min\{a_{i},a_{i+1}\}.$$
	Similarly, since the $(i+2)$-th and $(i+3)$-th coordinates of $\vv_i$ are $-1$, 
	$$\min_{1\le j\le n}\{a_j-M_{i,j}\}\le -1-\max\{a_{i+2},a_{i+3}\}.$$
	Then we have
	\begin{eqnarray*}
		d_{tr}(\vv_i, \aaa)&\ge & (1-\min\{a_{i},a_{i+1}\})-(-1-\max\{a_{i+2},a_{i+3}\})\\
		&=& 2+\max\{a_{i+2},a_{i+3}\}-\min\{a_{i},a_{i+1}\}.
	\end{eqnarray*}
	Summing over $i$, we get
	\begin{equation}\label{equ:a}
	\mathop{\sum}_{i=1}^{n}{d_{tr}(\vv_i,{\bf a})}\ge 2n+\mathop{\sum}_{i=1}^{n}{\left[\max\{a_{i},a_{i+1}\}-\min\{a_{i},a_{i+1}\}\right]}\ge 2n.
	\end{equation}
	By (\ref{equ:ori}) and (\ref{equ:a}), we know that $\bf 0$ is a
	Fermat-Weber point. Since ${\bf a}$ is also a Fermat-Weber
	point, all equalities in (\ref{equ:a}) hold. Hence
	$\max\{a_{i},a_{i+1}\}=\min\{a_{i},a_{i+1}\}$ for all $i$, which
	means $a_i=a_{i+1}$ for all $i$. So ${\bf a}={\bf 0}$ in
	$\RR^n/{\RR \bf 1}$. Then $\vv_1,\cdots,\vv_n$ have a unique
	Fermat-Weber point in $\RR^n/{\RR \bf 1}$. Finally since 
	$\vv_i \not = \bf 0$ for each $i = 1, \ldots, n$ in
	$\RR^n/{\RR \bf 1}$, the set of points $\vv_1,\cdots,\vv_n$ forms an essential set.
	
	As for the case when $n=3$, we have the following example of three points in $\RR^3/{\RR \bf 1}$:
	\[(-1, 1, 1), (1, -1, 1), (1, 1, -1).\]
	By simple computation we get that they have a unique Fermat-Weber point $(0,0,0)$ in $\RR^3/{\RR \bf 1}$ and thus they form an essential set.
\end{proof}

Proposition \ref{prop:un} shows the existence of essential sets of points with a unique Fermat-Weber point. However, the following theorem tells us that this case is very rare.

\begin{theorem}\label{thm:lowdim}
	Fix positive integers $m$ and $n$. Consider the space $\RR^{m(n-1)}$ of $m$ points $\vv_1, \vv_2, \ldots, \vv_m$ in $\RR^n/{\RR \bf 1}$. Then the points representing an essential set of points with a unique Fermat-Weber point are contained 
    in a finite union of proper linear subspaces in $\RR^{m(n-1)}$.
\end{theorem}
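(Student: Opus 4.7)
The plan is to decompose the set of essential configurations with a unique Fermat--Weber point into finitely many pieces indexed by the combinatorial type of the optimum, and to place each piece in a proper linear subspace via a dimension count. Given essential $\vv$ with unique FW point $\uu$, I would associate the type $\mathcal{A}=(A_1,\ldots,A_m)$, where $A_i=\{(j,k)\in[n]^2:u_j-u_k-v_{i,j}+v_{i,k}=d_{tr}(\uu,\vv_i)\}$ records which pairs realize $d_{tr}(\uu,\vv_i)$. There are only finitely many such tuples, and for each fixed $\mathcal{A}$ the requirement that $(\uu,\vv)$ realize $\mathcal{A}$ is expressed by $t:=\sum_i(|A_i|-1)$ linear equations in $(\uu,\vv)\in\RR^{n-1}\times\RR^{m(n-1)}$, obtained by equating $u_j-u_k-v_{i,j}+v_{i,k}$ across pairs within each $A_i$; so the corresponding locus sits in a linear subspace of codimension $t$ in $\RR^{(m+1)(n-1)}$.

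Next I would use essentiality to constrain the $A_i$: a directed cycle in some $A_i$ would, upon summing the tie-equalities around it, force $d_{tr}(\uu,\vv_i)=0$, hence $\uu=\vv_i$ in the quotient; Lemma \ref{lem:psuni} then makes $\vv_i$ a Fermat--Weber point of $\{\vv_j:j\neq i\}$, contradicting essentiality. Therefore each $A_i$ is an acyclic directed graph on $[n]$. Uniqueness of $\uu$, via Proposition \ref{prop:polytope}, translates to $0$ lying in the relative interior of the subdifferential $\sum_i\operatorname{conv}\{e_j-e_k:(j,k)\in A_i\}$ inside the hyperplane orthogonal to ${\bf 1}$; in particular this Minkowski sum has dimension $n-1$, giving the elementary bound $t\ge n-1$.

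The main obstacle is to upgrade this to $t\ge n$ under essentiality. When $t=n-1$ the Minkowski sum is direct (dimensions add), so the decomposition $0=\sum_i \mathbf{s}_i^*$ inside the subdifferential is unique. Writing each $\mathbf{s}_i^*=\sum_{(j,k)\in A_i}\mu_{i,(j,k)}(e_j-e_k)$, the combined system of global flow-balance ($\sum_i \mathbf{s}_i^*=0$) together with the per-layer unit normalizations ($\sum_{(j,k)\in A_i}\mu_{i,(j,k)}=1$) is a square linear system, and I would argue that when every $A_i$ is acyclic its unique solution must have some $\mu_{i,(j,k)}=0$, placing $0$ on the boundary rather than in the interior of the subdifferential and contradicting uniqueness. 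This combinatorial lemma about positive balanced circulations on unions of acyclic directed graphs is straightforward in small cases, and I expect a general proof to follow from a Hall-type or totally unimodular incidence argument on the multigraph $\bigcup_i A_i$. Once $t\ge n$ is secured, the projection of the type-locus to $\RR^{m(n-1)}$ has dimension at most $(m+1)(n-1)-n=m(n-1)-1$ and so lies in a proper linear subspace; taking the union over the finitely many admissible types $\mathcal{A}$ concludes.
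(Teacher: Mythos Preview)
Your outline has the right shape—stratify by the combinatorial type of the tight constraints at the optimum and then count dimensions—but it leaves the decisive step unproved and mis-handles two supporting points.

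First, the central ``$t\ge n$'' step is only asserted (``I expect a general proof to follow\ldots''), not established. This is exactly the crux, and the paper does prove the analogous inequality, but by a concrete perturbation argument rather than an abstract circulation lemma: with the unique Fermat--Weber point translated to $\mathbf 0$, one compares $\sum_i d_{tr}(\pm{\boldsymbol\epsilon}_j,\vv_i)$ to $\sum_i d_{tr}(\mathbf 0,\vv_i)$ via Lemma~\ref{lem:vari}/Corollary~\ref{cor:vari} and the strict minimality at $\mathbf 0$ to obtain $\sum_i\bigl(f(|P_i|)+f(|Q_i|)\bigr)\ge 2n$, hence $\sum_i\bigl((|P_i|-1)+(|Q_i|-1)\bigr)\ge n$. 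That is the inequality you are missing.

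Second, even granting ``$t\ge n$'', your projection step does not follow. Observe that each $A_i$ is automatically the complete bipartite digraph $\peak(\uu,\vv_i)\times\valley(\uu,\vv_i)$, so the $|A_i|-1=|P_i|\,|Q_i|-1$ tie-equations you write are redundant: they are equivalent to ``$u_\ell-v_{i,\ell}$ is constant on $P_i$'' and ``constant on $Q_i$'', which has rank only $(|P_i|-1)+(|Q_i|-1)$. Thus your $t$ overcounts the codimension, and a bound on the number of equations does not bound $\dim\pi(L)$ from above. What one actually needs is a linear relation in the $\vv$-variables alone; the paper obtains this by building an auxiliary graph $G$ on $[n]$ with $(|P_i|-1)+(|Q_i|-1)$ edges from each $i$ (spanning trees on $P_i$ and on $Q_i$), deducing from the inequality above that $G$ has at least $n$ edges and hence a cycle, and then summing the tie-equalities around that cycle so that the $u$-terms telescope away, yielding $x_S=x_T$ for a pair of similar index sets $S\neq T$. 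This is precisely the elimination of $\uu$ that your projection argument would still have to perform.

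Finally, your appeal to Lemma~\ref{lem:psuni} to rule out directed cycles in $A_i$ is misdirected: that lemma says that \emph{adding} a Fermat--Weber point to the sample makes it the unique one, not that a sample point which is the unique Fermat--Weber point must be a Fermat--Weber point of the rest. The acyclicity you want is in fact trivial once you note $A_i=P_i\times Q_i$ with $P_i\cap Q_i=\emptyset$, so every edge goes from $P_i$ to $Q_i$.
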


\begin{definition}\label{def:pkvl}
Let ${\bf u}=(u_1,\ldots,u_n),{\bf v}=(v_1,\ldots,v_n)$ be two points in $\RR^n/{\RR \bf 1}$ and $d=d_{tr}({\bf u,v})>0$. The \emph{peaks} and \emph{valleys} of ${\bf u,v}$ are the following subsets of $[n]$:
\begin{equation*}\peak({\bf u,v})=\argmax_{1\le i\le n}{\{u_i-v_i\}} \text{ , } \valley({\bf u,v})=\argmin_{1\le i\le n}{\{u_i-v_i\}}.\end{equation*}.
\end{definition}

We prove a few lemmas before we prove Theorem \ref{thm:lowdim}.
\begin{lemma}\label{lem:vari}
	Let ${\bf u}=(u_1,\ldots,u_n),{\bf v}=(v_1,\ldots,v_n)$ be two points in $\RR^n/{\RR \bf 1}$ and $d=d_{tr}({\bf u,v})>0$. Let $\epsilon$ be a positive real number less than the minimum of the set
	\begin{equation*}\{|(u_i-v_i)-(u_j-v_j)|:1\le i<j\le n\}-\{0\}.\end{equation*}
	(Since $d>0$, the above set is nonempty.)
	For $1\le i\le n$, we denote ${\boldsymbol \epsilon}_i$ as the vector in $\RR^n/{\RR \bf 1}$ whose $i$-th entry is $\epsilon$ and other entries are zero.
	Then we have
	\begin{equation}\label{equ:plus}
		d_{tr}({\bf u}+{\boldsymbol \epsilon}_i,{\bf v})=\begin{cases}
		d, &\text{ if } i\notin \peak({\bf u,v})\cup \valley({\bf u,v}); \\
		d+\epsilon, &\text{ if } i\in \peak({\bf u,v}); \\
		d-\epsilon, &\text{ if } i\in \valley({\bf u,v}) \text{ and } |\valley({\bf u,v})|=1; \\
		d &\text{ if } i\in \valley({\bf u,v}) \text{ and } |\valley({\bf u,v})|\ge 2. \\
		\end{cases}
	\end{equation}
	Similarly,
	\begin{equation}\label{equ:minus}
		d_{tr}({\bf u}-{\boldsymbol \epsilon}_i,{\bf v})=\begin{cases}
		d, &\text{ if } i\notin \peak({\bf u,v})\cup \valley({\bf u,v}); \\
		d+\epsilon, &\text{ if } i\in \valley({\bf u,v}); \\
		d-\epsilon, &\text{ if } i\in \peak({\bf u,v}) \text{ and } |\peak({\bf u,v})|=1; \\
		d &\text{ if } i\in \peak({\bf u,v}) \text{ and } |\peak({\bf u,v})|\ge 2. \\
		\end{cases}
	\end{equation}
\end{lemma}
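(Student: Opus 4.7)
The plan is to work directly with the characterization in (\ref{equ:difset}), namely $d_{tr}({\bf u},{\bf v})=\max(\mathcal{D}_{\uu,\vv})-\min(\mathcal{D}_{\uu,\vv})$. Adding ${\boldsymbol \epsilon}_i$ to ${\bf u}$ alters exactly one element of the difference multiset $\mathcal{D}_{\uu,\vv}$, namely $u_i-v_i$, shifting it up by $\epsilon$; subtracting ${\boldsymbol \epsilon}_i$ shifts it down by $\epsilon$. The decisive feature of the bound on $\epsilon$ is that $\epsilon$ is strictly less than any nonzero gap $|(u_j-v_j)-(u_k-v_k)|$, which means the shifted value cannot pass (or even reach) any other value in $\mathcal{D}_{\uu,\vv}$. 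So identifying the new maximum and minimum reduces to a purely combinatorial case split.

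I would prove (\ref{equ:plus}) in four cases, writing $D=\max(\mathcal{D}_{\uu,\vv})$ and $d_{\min}=\min(\mathcal{D}_{\uu,\vv})$, so $d=D-d_{\min}$. If $i \notin \peak({\bf u},{\bf v})\cup \valley({\bf u},{\bf v})$, then $u_i-v_i$ is strictly between $d_{\min}$ and $D$, and by the $\epsilon$-bound the perturbed value $(u_i-v_i)+\epsilon$ still lies strictly between them, so neither max nor min changes. If $i\in\peak({\bf u},{\bf v})$, the perturbed value is $D+\epsilon$, which is the unique new maximum; since $d>0$ forces $\peak$ and $\valley$ to be disjoint, the minimum is unchanged, giving $d+\epsilon$. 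If $i\in\valley({\bf u},{\bf v})$ and $|\valley({\bf u},{\bf v})|=1$, the perturbed value $d_{\min}+\epsilon$ is still strictly less than every other element of $\mathcal{D}_{\uu,\vv}$, so it is the new minimum, and the distance drops to $d-\epsilon$. If $i\in\valley({\bf u},{\bf v})$ and $|\valley({\bf u},{\bf v})|\geq 2$, some other index still attains $d_{\min}$, so the minimum (and the maximum) are preserved.

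The proof of (\ref{equ:minus}) is symmetric: subtracting ${\boldsymbol \epsilon}_i$ from ${\bf u}$ is the same as adding ${\boldsymbol \epsilon}_i$ to ${\bf v}$ (up to the invariance in (\ref{equ:difset})), which interchanges the roles of $\peak$ and $\valley$. There is no real obstacle here beyond organizing the cases; the only point requiring care is confirming that the $\epsilon$-bound really does prevent the single perturbed element of $\mathcal{D}_{\uu,\vv}$ from crossing any other, which is immediate from how $\epsilon$ is chosen, and that $\peak$ and $\valley$ are disjoint under the hypothesis $d>0$.
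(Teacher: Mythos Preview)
Your argument is correct and essentially identical to the paper's own proof: both use the formula $d_{tr}({\bf u},{\bf v})=\max(\mathcal{D}_{\uu,\vv})-\min(\mathcal{D}_{\uu,\vv})$, observe that perturbing by ${\boldsymbol\epsilon}_i$ shifts only the single entry $u_i-v_i$, and then run the same case split on whether $i$ lies in $\peak$, in $\valley$ (with the sub-split on $|\valley|$), or in neither, treating the ``$-{\boldsymbol\epsilon}_i$'' half by symmetry. Your explicit remarks that $\peak$ and $\valley$ are disjoint when $d>0$ and that the $\epsilon$-bound prevents crossing any other value of $\mathcal{D}_{\uu,\vv}$ are exactly the points the paper uses implicitly.
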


\begin{proof}
	We use formula (\ref{equ:difset}). Let $\mathcal{D}_{\bf u,v}$ be the set $\{u_i-v_i|1\le i\le n\}$ for any two vectors ${\bf u,v}\in \RR^n/{\RR \bf 1}$. 
	
	We consider $d_{tr}({\bf u}+{\boldsymbol \epsilon}_i,{\bf v})$ first. If $i\notin peak({\bf u,v})\cup valley({\bf u,v})$, then $u_i-v_i$ is between the maximum and minimum of $\mathcal{D}_{\bf u,v}$. So $\mathcal{D}_{{\bf u}+{\boldsymbol \epsilon}_i,{\bf v}}$ has the same maximum and minimum as $D_{\bf u,v}$, then $d_{tr}({\bf u}+{\boldsymbol \epsilon}_i,{\bf v})=d_{tr}({\bf u,v})$. If $i\in peak({\bf u,v})$, then $\mathcal{D}_{{\bf u}+{\boldsymbol \epsilon}_i,{\bf v}}$ has the same minimum as $\mathcal{D}_{\bf u,v}$, but $\max(\mathcal{D}_{{\bf u}+{\boldsymbol \epsilon}_i,{\bf v}})=\max(\mathcal{D}_{\bf u,v})+\epsilon$. So $d_{tr}({\bf u}+{\boldsymbol \epsilon}_i,{\bf v})=d_{tr}({\bf u,v})+\epsilon$. 
	
	If $i\in valley({\bf u,v})$, then $u_i-v_i$ is the minimum of $\mathcal{D}_{\bf u,v}$. So $\mathcal{D}_{{\bf u}+{\boldsymbol \epsilon}_i,{\bf v}}$ has the same maximum as $\mathcal{D}_{\bf u,v}$. As for the minimum, if $|valley({\bf u,v})|\ge 2$, then there exists $k\ne i$ with $u_k-v_k=u_i-v_i$. Then $u_k-v_k\in \mathcal{D}_{{\bf u}+{\boldsymbol \epsilon}_i,{\bf v}}$ and $\mathcal{D}_{{\bf u}+{\boldsymbol \epsilon}_i,{\bf v}}$ has the same minimum as $\mathcal{D}_{\bf u,v}$. As a result, $d_{tr}({\bf u}+{\boldsymbol \epsilon}_i,{\bf v})=d_{tr}({\bf u,v})$. If $|valley({\bf u,v})|=1$, then all other elements in $\mathcal{D}_{\bf u,v}$ are strictly greater than $u_i-v_i$, thus $\min(\mathcal{D}_{{\bf u}+{\boldsymbol \epsilon}_i,{\bf v}})=\min(\mathcal{D}_{\bf u,v})+\epsilon$. So $d_{tr}({\bf u}+{\boldsymbol \epsilon}_i,{\bf v})=d_{tr}({\bf u,v})-\epsilon$.
	
	The cases of $d_{tr}({\bf u}-{\boldsymbol \epsilon}_i,{\bf v})$ could be analyzed in the same way.
\end{proof}

Next, for (\ref{equ:plus}) and (\ref{equ:minus}), if we sum over $i$, we get the following corollary.
\begin{corollary}\label{cor:vari}
	Let ${\bf u},{\bf v}$ be two points in $\RR^n/{\RR \bf 1}$. Let $d,e$ and ${\boldsymbol \epsilon}_i$ be the same as in Lemma \ref{lem:vari}. Then
	\begin{equation*}\mathop{\sum}_{i=1}^{n}{(d_{tr}({\bf u}+{\boldsymbol \epsilon}_i,{\bf v})+d_{tr}({\bf u}-{\boldsymbol \epsilon}_i,{\bf v}))}
	=2n\cdot d+[f(|peak({\bf u,v})|)+f(|valley({\bf u,v})|)]\cdot e,
	\end{equation*}
	where $f$ is the function defined on $\ZZ_{}+$ by
	\begin{equation*}f(n)=\begin{cases}
	0, &\text{ if } n=1; \\
	n, &\text{ if } n\ge 2. \\
	\end{cases}.\end{equation*}
\end{corollary}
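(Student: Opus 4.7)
The plan is to apply Lemma~\ref{lem:vari} term by term and then sum over $i \in [n]$. First I would observe that the standing hypothesis $d > 0$ forces $\max(\mathcal{D}_{{\bf u},{\bf v}}) > \min(\mathcal{D}_{{\bf u},{\bf v}})$, so $\peak({\bf u},{\bf v})$ and $\valley({\bf u},{\bf v})$ are disjoint subsets of $[n]$. This lets me partition $[n]$ cleanly into three parts: indices lying outside both extremal sets, indices in $\peak({\bf u},{\bf v})$, and indices in $\valley({\bf u},{\bf v})$.

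Next I would carry out the case analysis for $d_{tr}({\bf u}+{\boldsymbol \epsilon}_i,{\bf v}) + d_{tr}({\bf u}-{\boldsymbol \epsilon}_i,{\bf v})$ by adding the relevant line of (\ref{equ:plus}) to the relevant line of (\ref{equ:minus}). An index outside both sets contributes $2d$. An index $i \in \peak({\bf u},{\bf v})$ contributes $(d+\epsilon) + (d-\epsilon) = 2d$ when $|\peak({\bf u},{\bf v})| = 1$, and $(d+\epsilon) + d = 2d + \epsilon$ when $|\peak({\bf u},{\bf v})| \ge 2$. The valley case is symmetric, with the roles of (\ref{equ:plus}) and (\ref{equ:minus}) swapped.

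Summing these contributions over $i \in [n]$, the excess above $2nd$ equals $|\peak({\bf u},{\bf v})|\cdot \epsilon$ when $|\peak({\bf u},{\bf v})| \ge 2$ and $0$ when $|\peak({\bf u},{\bf v})| = 1$, plus the analogous valley term. This is exactly $[f(|\peak({\bf u},{\bf v})|) + f(|\valley({\bf u},{\bf v})|)] \cdot \epsilon$, which matches the claimed identity (reading the symbol $e$ in the corollary as the $\epsilon$ of Lemma~\ref{lem:vari}). There is no genuine obstacle here; the corollary is essentially a bookkeeping consequence of the lemma. The only point worth highlighting is the asymmetry between the singleton and multi-element cases: when the extremal set is a singleton the $+\epsilon$ and $-\epsilon$ contributions cancel, while when it has size at least $2$ a second extremal index absorbs one of the perturbations, breaking the cancellation and producing the contribution counted by $f$.
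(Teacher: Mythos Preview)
Your proposal is correct and follows exactly the approach the paper takes: the paper simply remarks that summing (\ref{equ:plus}) and (\ref{equ:minus}) over $i$ yields the corollary, and you have spelled out that summation carefully. Your observation that the symbol $e$ should be read as the $\epsilon$ of Lemma~\ref{lem:vari} is also correct---this is a typo in the statement.
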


\begin{definition}\label{defn:sim}
	Let $m$ and $n$ be positive integers. Two subsets $S,T \subset [m]\times [n]$ are called \emph{similar} if for $1\le i\le m$ we have
	\begin{equation*}|\{k|(i,k)\in S \}|=|\{k|(i,k)\in T \}|\end{equation*}
	and for $1\le j\le n$ we have
	\begin{equation*}|\{k|(k,j)\in S \}|=|\{k|(k,j)\in T \}|.\end{equation*}
	In other words, $S$ and $T$ are similar if and only if given any row or column of $M$, they have the same number of elements in it.	
\end{definition}

The following lemma explicitly tells us the defining equations of the finite union of proper linear subspaces. 

\begin{lemma}\label{lem:defequ}
	Let $X=(x_{i,j})_{1\le i\le m,1\le j\le n}$ be an $m\times n$ matrix. For any set $S\subset [m]\times [n]$, let 
	\begin{equation*}x_{S}=\mathop{\sum}_{(i,j)\in S}{x_{i,j}}.\end{equation*}
	If the row vectors of $X\in \RR^{m(n-1)}$ form an essential set with a unique Fermat-Weber point in $\RR^n/{\RR \bf 1}$, then there exist disjoint $S,T\subset [m]\times [n]$ such that $S$ and $T$ are similar and $x_{S}=x_{T}$.
\end{lemma}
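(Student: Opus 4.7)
Let $\mathbf{u}$ denote the unique tropical Fermat-Weber point, and set $P_k = \peak(\mathbf{u}, \vv_k)$, $V_k = \valley(\mathbf{u}, \vv_k)$, $\Sigma = \prod_k P_k$, $\Upsilon = \prod_k V_k$. The plan is to produce two distinct pairs $(\sigma, \tau) \in \Sigma \times \Upsilon$ that both attain the maximum in Theorem \ref{thm:sum}; once two such pairs are in hand, the equation $x_S = x_T$ will drop out by elementary manipulation.

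First I would translate uniqueness into a combinatorial statement. By Lemma \ref{lem:vari} together with the convexity and piecewise-linearity of $F(\mathbf{x}) = \sum_k d_{tr}(\mathbf{x}, \vv_k)$, uniqueness of the minimizer $\mathbf{u}$ is equivalent to the inequality $\sum_k (\max_{p \in P_k} y_p - \min_{q \in V_k} y_q) > 0$ for every $\mathbf{y} \notin \RR \mathbf{1}$, i.e., the set $\{\ww_\sigma - \ww_\tau : \sigma \in \Sigma, \tau \in \Upsilon\}$ positively spans $\RR^n / \RR \mathbf{1}$. Gordan's theorem then furnishes non-negative integers $A_\sigma, B_\tau$ (not all zero) with $\sum_\sigma A_\sigma \ww_\sigma = \sum_\tau B_\tau \ww_\tau$ and $N := \sum A_\sigma = \sum B_\tau > 0$, and because the span is non-trivial either $(A_\sigma)$ or $(B_\tau)$ must be supported on at least two elements. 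I then arrange the $\sigma$'s with multiplicity $A_\sigma$ as rows of an $N \times m$ matrix $\mathcal{A}$, and the $\tau$'s with multiplicity $B_\tau$ as rows of $\mathcal{B}$; the identity $\sum A_\sigma \ww_\sigma = \sum B_\tau \ww_\tau$ says exactly that $\mathcal{A}$ and $\mathcal{B}$ have identical multisets of entries in each column. Applying Lemma \ref{lem:mat} yields column-wise permutations $\mathcal{A}', \mathcal{B}'$ whose $j$-th rows share the same multiset for every $j$; because columns are preserved, the $j$-th row $\sigma'_j$ of $\mathcal{A}'$ still lies in $\Sigma$ and the $j$-th row $\tau'_j$ of $\mathcal{B}'$ in $\Upsilon$, with $\ww_{\sigma'_j} = \ww_{\tau'_j}$. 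Theorem \ref{thm:sum} then forces $S(\tau'_j) - S(\sigma'_j) = D$ for every $j$, and the non-trivial support from the previous step guarantees that $\mathcal{A}'$ (or $\mathcal{B}'$) has at least two distinct rows, delivering two distinct achiever pairs $(\sigma'_1, \tau'_1)$ and $(\sigma'_2, \tau'_2)$.

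Finally, subtracting the two identities gives $S(\tau'_1) + S(\sigma'_2) = S(\tau'_2) + S(\sigma'_1)$, i.e., the multiset equation $x_{S_{\tau'_1} \sqcup S_{\sigma'_2}} = x_{S_{\sigma'_1} \sqcup S_{\tau'_2}}$; the two multisets have matching row signatures, and their column signatures agree because $\ww_{\sigma'_j} = \ww_{\tau'_j}$. To extract honest disjoint subsets from this multiset identity, I cancel common entries row by row: whenever $\sigma'_1(i) = \sigma'_2(i)$ or $\tau'_1(i) = \tau'_2(i)$ the repeated coordinate drops from both sides equally, and on every other row the four values $\sigma'_1(i), \sigma'_2(i), \tau'_1(i), \tau'_2(i)$ are pairwise distinct because peaks and valleys relative to $\vv_i$ are disjoint. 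The surviving entries define disjoint similar subsets $S, T \subset [m] \times [n]$ with $x_S = x_T$, non-empty because the two achiever pairs were chosen distinct. The main obstacle is the convex-analytic step that upgrades strict positivity of the directional derivative into a non-trivial non-negative integer relation whose support forces two distinct achiever pairs after Lemma \ref{lem:mat}, together with properly handling degenerate rows where $d_{tr}(\mathbf{u}, \vv_i) = 0$, for which essentiality is brought in to justify reducing to the non-degenerate case.
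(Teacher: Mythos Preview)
Your approach is genuinely different from the paper's. The paper translates so that the unique Fermat--Weber point is $\mathbf{0}$, builds a multigraph on $[n]$ by throwing in spanning trees on each $P_i$ and each $Q_i$, and then uses the perturbation count of Corollary~\ref{cor:vari} to show this graph has at least $n$ edges and hence a cycle; the cycle directly encodes the similar pair $S,T$. You instead pass through LP duality: uniqueness gives positive spanning of $\{\ww_\sigma-\ww_\tau\}$, a non-negative dependence among these vectors is fed into Lemma~\ref{lem:mat} to manufacture two distinct balanced pairs $(\sigma'_1,\tau'_1),(\sigma'_2,\tau'_2)$, and the pair $S,T$ is read off from their symmetric difference. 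The paper's route is more self-contained (no Gordan, no re-use of Lemma~\ref{lem:mat}); yours explains more transparently \emph{why} two balanced achievers must exist, and it re-uses machinery already in the paper.

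Two points need tightening. First, the sentence ``because the span is non-trivial either $(A_\sigma)$ or $(B_\tau)$ must be supported on at least two elements'' is not justified as written. Positive spanning of an $(n-1)$-dimensional space gives a non-trivial non-negative relation $\sum c_{\sigma,\tau}(\ww_\sigma-\ww_\tau)=0$, but nothing prevents this relation from being supported on a single pair with $\ww_{\sigma_0}=\ww_{\tau_0}$. What you actually need is that the relation can be chosen with $|\operatorname{supp}(c)|\geq 2$; this follows because once you discard the indices with $\ww_\sigma-\ww_\tau=0$ the remaining vectors still positively span, and any non-negative relation among nonzero vectors has support $\geq 2$. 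Then $|\operatorname{supp}(c)|\geq 2$ forces $(A_\sigma)$ or $(B_\tau)$ to have support $\geq 2$, and your column-multiset argument shows $\mathcal{A}'$ or $\mathcal{B}'$ inherits two distinct rows.

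Second, the reduction of the degenerate case $d_{tr}(\mathbf{u},\vv_i)=0$ via essentiality is asserted but not carried out, and it is not obvious: you would need that if $\mathbf{u}=\vv_i$ is the \emph{unique} Fermat--Weber point of the full set then $\vv_i$ is already a Fermat--Weber point of the remaining $m-1$ points, and that implication is not immediate from the definitions. The paper's proof sidesteps this entirely: if $\vv_i=\mathbf{0}$ then $P_i=Q_i=[n]$, Corollary~\ref{cor:vari} still applies verbatim (the computation gives $2n\epsilon$ on the nose), and the trees $T_{P_i},T_{Q_i}$ alone contribute $2(n-1)\geq n$ edges, so the cycle argument goes through. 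In your framework the cleanest fix is not to invoke essentiality at all but to note that for a degenerate row $i$ one may freely set $\sigma'_1(i)=\sigma'_2(i)=\tau'_1(i)=\tau'_2(i)$ without disturbing $\ww_{\sigma'_j}=\ww_{\tau'_j}$ or the value $S(\tau'_j)-S(\sigma'_j)$, so that row $i$ simply cancels in the final extraction.
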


\begin{proof}
	Suppose $X$ is an $m\times n$ matrix with entries $x_{i,j}$ such that the row vectors ${\bf v}_1,\ldots,{\bf v}_m$ of $X$ form an essential set of points in $\RR^n/{\RR \bf 1}$ with a unique Fermat-Weber point ${\bf c}\in \RR^n/{\RR \bf 1}$. Then the points ${\bf v}_1-{\bf c},\ldots,{\bf v}_m-{\bf c}$ also form an essential set and they have a unique Fermat-Weber point $\bf 0$. Let $X'=(x'_{i,j})$ be the corresponding matrix of these points. Then $x'_{i,j}=x_{i,j}-c_{j}$ for any $1\le i\le m,1\le j\le n$. Note that for $S,T\subset [m]\times [n]$, if $S$ and $T$ are similar, then $x_{S}=x_{T}$ if and only if $x'_{S}=x'_{T}$. Then we may assume the unique Fermat-Weber point of ${\bf v}_1,\ldots,{\bf v}_m$ is $\bf 0$.
	
	Now we construct an undirected graph $G=(V,E)$. Let $V=[n]$. For $1\le i\le m$, let $P_i=peak({\bf 0},{\bf v}_i)$ and $Q_i=valley({\bf 0},{\bf v}_i)$. Then $P_i,Q_i\subset [n]$. For $1\le i\le n$, we choose an arbitrary tree $T_{P_{i}}$ whose set of vertices is $P_i$ and include its edges into $E$ and we choose an arbitrary tree $T_{Q_{i}}$ whose set of vertices is $Q_i$ and include its edges into $E$. Note that if $|P_i|=1$ then $T_{P_{i}}$ has no edge. Here we allow parallel edges in $G$, because $P_i$ may equal to $P_j$ for different $i$ and $j$. 
	
	It suffices to show that $G$ contains a cycle. Suppose one minimal cycle in $G$ has $r$ distinct vertices $j_1,j_2,\ldots,j_r \in [n]$, where for each $1\le t\le r$ there is an edge connecting $j_t$ and $j_{t+1}$ (we denote $j_{r+1}=j_1$). By definition, there exists $i_t\in [m]$ such that $\{j_t,j_{t+1}\} \subset P_{i_t}$ or $\{j_t,j_{t+1}\} \subset Q_{i_t}$. In either case we have that
	\begin{equation}\label{equ:loc}
	x_{i_t,j_t}=x_{i_t,j_{t+1}}.
	\end{equation}
	Then we define the two subsets $S,T$ of $[m]\times [n]$ as follows:
	\begin{equation*}S=\{(i_t,j_t)|1\le t\le r\}\text{ , } T=\{(i_t,j_{t+1})|1\le t\le r\}.\end{equation*}
	Then $x_{S}=x_{T}$ follows from (\ref{equ:loc}). In addition, for $j\in [n]$, if $j=j_t$ for some $t$ then both $S$ and $T$ have one element in the $j$-th column of $M$; otherwise both $S$ and $T$ have no elements in the $j$-th column of $M$. For $i \in [m]$, both $S$ and $T$ have $|\{t|i_t=i\}|$ elements in the $i$-th row of $M$. Then $S$ and $T$ are similar. Next we show that $S\ne T$. Suppose $S=T$, then for each $1\le t\le r$, the unique element of $S$ in the $j_t$-th column is equal to the unique element of $T$ in the $j_t$-th column, which means $(i_t,j_t)=(i_{t-1},j_t)$. Then we have $i_t=i_{t-1}$.
	So $i_1=i_2=\ldots=i_r$, which means all $r$ vertices in the cycle are chosen from $P_i \cup Q_i$. Since $P_i$ and $Q_i$ are disjoint, either all vertices are chosen from $P_i$ or all vertices are chosen from $Q_i$. Then in either case, the edges in the cycles are either all chosen from $T_{P,i}$ or all chosen from $T_{Q,i}$, which contradicts the fact that both $T_{P,i}$ and $T_{Q,i}$ are trees. Therefore $S\ne T$. Finally if $S$ and $T$ have common elements, then we can delete them to get another pair of similar subsets $S',T'$, and we still have $x_{S'}=x_{T'}$. So we can choose disjoint $S$ and $T$.
	
	Finally we show that $G$ contains a cycle. We compute the following sum
	\begin{equation*}
	\mathcal{K}=\mathop{\sum}_{i=1}^{m}{\mathop{\sum}_{j=1}^{n}{(d_{tr}({\boldsymbol \epsilon}_j,{\bf v}_i)+d_{tr}(-{\boldsymbol \epsilon}_j,{\bf v}_i))}}.
	\end{equation*}
	On one hand, since ${\bf v}_1,\ldots,{\bf v}_m$ have a unique Fermat-Weber point $\bf 0$, we have
	\begin{equation}\label{ine:pos}
		\mathop{\sum}_{i=1}^{m}{d_{tr}({\bf w},{\bf v}_i)}>\mathop{\sum}_{i=1}^{m}{d_{tr}({\bf 0},{\bf v}_i)}
	\end{equation}
	for any nonzero vector ${\bf w}\in \RR^n/{\RR \bf 1}$. By (\ref{equ:plus}) and (\ref{equ:minus}), for $1\le i\le m$
	\begin{equation*}d_{tr}(\pm{\boldsymbol \epsilon}_j,{\bf v}_i)-d_{tr}({\bf 0},{\bf v}_i)\end{equation*}
	is $\pm \bf \epsilon$ or zero. Then difference between the LHS and the RHS of (\ref{ine:pos}) is an integer multiple of $\bf \epsilon$. Hence
	\begin{equation}\label{ine:str}
		\mathop{\sum}_{i=1}^{m}{d_{tr}({\bf w},{\bf v}_i)}-\mathop{\sum}_{i=1}^{m}{d_{tr}({\bf 0},{\bf v}_i)}\ge \epsilon
	\end{equation}
	for ${\bf w}=\pm {\boldsymbol \epsilon}_j$. Summing over $j$, we have
	\begin{equation}\label{ine:lowbd}
		\mathcal{K}\ge 2n\mathop{\sum}_{i=1}^{m}{d_{tr}({\bf 0},{\bf v}_i)}+2n\cdot \epsilon.
	\end{equation}
	
	On the other hand, by Corollary \ref{cor:vari} we have
	\begin{equation*}\mathop{\sum}_{j=1}^{n}{(d_{tr}({\boldsymbol \epsilon}_j,{\bf v}_i)+d_{tr}(-{\boldsymbol \epsilon}_j,{\bf v}_i))}=2n\cdot d_{tr}({\bf 0},{\bf v}_i)+[f(|P_i|)+f(|Q_i|)]\cdot \epsilon.\end{equation*}
	Summing over $i$ we have
	\begin{equation}\label{equ:equ}
		\mathcal{K}=2n\mathop{\sum}_{i=1}^{m}{d_{tr}({\bf 0},{\bf v}_i)}+\left[\mathop{\sum}_{i=1}^{m}{(f(|P_i|)+f(|Q_i|))}\right] \cdot \epsilon.
	\end{equation}
	Comparing (\ref{ine:lowbd}) and (\ref{equ:equ}), we get
	\begin{equation}\label{ine:sum}
	    \mathop{\sum}_{i=1}^{m}{(f(|P_i|)+f(|Q_i|))}\ge 2n.
	\end{equation}
    Next, for $x\ge 2$ we have
    \begin{equation*}x-1\ge \frac{x}{2}=\frac{1}{2}f(x)\end{equation*}
    and when $x=1$ both $x-1$ and $f(x)$ are zero. Then
    \begin{equation*}
    \mathop{\sum}_{i=1}^{m}{(|P_i|-1)+(|Q_i|-1)}\ge \frac{1}{2}\mathop{\sum}_{i=1}^{m}{(f(|P_i|)+f(|Q_i|))}\ge n.
    \end{equation*}
    So the graph $G$ has at least $n$ edges and it contains a cycle.
\end{proof}

\begin{proof}[Proof of Theorem \ref{thm:lowdim}]
	For $(i,j)\in [m]\times [n]$, let $X_{i,j}$ be variables. For $S\subset [m]\times [n]$, let
	\begin{equation*}X_{S}=\sum_{(i,j)\in S}{X_{i,j}}.\end{equation*}
	We define the polynomial
	\begin{equation*}F=\mathop{\prod}_{\substack{S,T\subset [m]\times [n] \\ S\ne T\\ S,T \text{ are similar}} }{(X_{S}-X_{T})}.\end{equation*}
	Then $F\in \RR[X_{1,1},X_{1,2},\ldots,X_{m,n}].$
	By Lemma \ref{lem:defequ}, if an $m\times n$ matrix
        $M=(m_{i,j})$ corresponds to an essential set of $m$ points
        with a unique Fermat-Weber point in $\RR^n/{\RR \bf 1}$, then
        there exist distinct $S,T\subset [m]\times [n]$ such that $S$
        and $T$ are similar and $M_S=M_T$. So $F((m_{i,j}))=0$. As a
        result, the points of $\RR^{m(n-1)}$ corresponding to an
        essential set of $m$ points with a unique Fermat-Weber point
        in $\RR^n/{\RR \bf 1}$ are contained in 
the union of proper linear subspaces $V(F)$.
\end{proof}

The immediate consequence of Theorem \ref{thm:lowdim} is as follows:
\begin{corollary}\label{cor:open}
	If we choose a random sample in the moduli space $\RR^{m(n-1)}$
        with any distribution $\nu$ with $\nu(L) = 0$ for
        any $L \subset \RR^{m(n-1)}$ with the dimension
        of $L$ is strictly less than $m(n-1)$, then we have
        probability $1$ to get either a random sample that is not
        essential, or a random sample that has more than one (thus infinitely many) Fermat-Weber points.
\end{corollary}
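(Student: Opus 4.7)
The plan is to read Corollary \ref{cor:open} as an immediate measure-theoretic consequence of Theorem \ref{thm:lowdim}, combined with Proposition \ref{prop:polytope} to upgrade ``more than one'' to ``infinitely many'' Fermat-Weber points.

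First, I would invoke Theorem \ref{thm:lowdim} to assert that the locus
\[
E = \{(\vv_1,\ldots,\vv_m) \in \RR^{m(n-1)} : \{\vv_1,\ldots,\vv_m\}\text{ is essential and has a unique Fermat-Weber point}\}
\]
is contained in a finite union $\bigcup_{k=1}^{N} L_k$, where each $L_k$ is a proper linear subspace of $\RR^{m(n-1)}$. Since each $L_k$ is proper, $\dim L_k < m(n-1)$, so by the hypothesis on $\nu$ we have $\nu(L_k)=0$ for every $k$. Finite subadditivity of the measure then gives
\[
\nu(E) \le \nu\Bigl(\bigcup_{k=1}^{N} L_k\Bigr) \le \sum_{k=1}^{N} \nu(L_k) = 0.
\]
Hence with $\nu$-probability one a random sample lies outside $E$.

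Next I would unpack what lying outside $E$ means: the sample is either not essential, or it is essential but has at least two Fermat-Weber points. In the latter case, Proposition \ref{prop:polytope} says the set of Fermat-Weber points is a classical convex polytope in $\RR^n/\RR\mathbf{1}$; a convex polytope that contains two distinct points contains the entire line segment between them, hence uncountably (in particular infinitely) many points. Combining these two observations yields the dichotomy asserted in the corollary.

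There is essentially no obstacle here: the argument is a direct application of Theorem \ref{thm:lowdim} and the null-set hypothesis on $\nu$, together with the elementary polytope observation from Proposition \ref{prop:polytope}. The only mild subtlety is ensuring that ``more than one'' and ``infinitely many'' are interchangeable in this setting, which is exactly what the convexity of the Fermat-Weber locus provides.
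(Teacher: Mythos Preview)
Your proof is correct and follows essentially the same route as the paper: invoke Theorem \ref{thm:lowdim} to place the bad set inside a finite union of proper linear subspaces, then use the null-set hypothesis on $\nu$ and finite subadditivity. Your explicit appeal to Proposition \ref{prop:polytope} to justify the parenthetical ``(thus infinitely many)'' is a small addition the paper leaves implicit.
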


\begin{proof}
	Let $C$ be the random sample in $\RR^{m(n-1)}$ that
        corresponds to an essential set of $m$ points with a unique
        Fermat-Weber point. Then it suffices to show that the measure
        of $C$ is zero. By Theorem \ref{thm:lowdim}, $C$ is contained
        in the finite union of hypersurfaces $V(X_{S}-X_{T})$, where
        $S,T\subset [m]\times [n]$, $S$ and $T$ are distinct and
        similar. Then for each pair of such $S$ and $T$, the
        hypersurface $V(X_{S}-X_{T})$ is isomorphic to
        $\RR^{m(n-1)-1}$. So it has measure zero. Thus, the measure of
        this finite union is still zero, and so is $C$.	
\end{proof}

\begin{definition}[Tropical Determinant]
	Let $X=(x_{ij})$ be an $n \times n$ matrix with real entries. Then its tropical determinant is defined as follows:
	\begin{equation}\label{equ:det}
		\text{trop}\det X = \min_{\pi\in S_n} {\sum_{i=1}^{n}{x_{i\pi(i)}}}.
	\end{equation}
	A matrix $X$ is tropically singular if the minimum is attained at least twice in (\ref{equ:det}).
\end{definition}

In the proof of Lemma \ref{lem:defequ}, the subsets $S$ and $T$
are very similar to the terms in the tropical determinant of matrices. However the following example shows that the
matrix does not need to have a minor whose tropical determinant contains
two equal terms.

\begin{example}\label{ex:perm}[No equal terms in the tropical determinant of all minors]
	The following five points in $\RR^3/{\RR \bf 1}$ 
	\begin{equation*}(1, -1, -1), (-1, 1, -1), (1, 1, -1), (0, -1, 1), (-1, 0, 1)\end{equation*}
	form an essential set and they have a unique Fermat-Weber
        point $(0,0,0)$ in $\RR^3/{\RR \bf 1}$. However, let $M$ be
        the corresponding $5\times 3$ matrix.  No minor of $M$ is
        tropically singular.
In addition, for every minor of $M$, its tropical determinant has no equal terms. 
\end{example}

\begin{remark}\label{rem:ne}
	The converse of Theorem \ref{thm:lowdim} is not true in general. The following three points in $\RR^4/{\RR \bf 1}$
	\begin{equation*}(0,0,0,5),(0,0,3,1),(0,4,5,7)\end{equation*}
	correspond to a point in the finite union of proper linear subspaces
of $\RR^{9}$ as in Lemma \ref{lem:defequ}, because we can take $S=\{(1,1),(2,2)\}$ and $T=\{(1,2),(2,1)\}$. However, their polytope of Fermat-Weber point is a line segment in $\RR^4/{\RR \bf 1}$ with endpoints
	\begin{equation*}(0,2,3,5),(0,3,3,5).\end{equation*}
	So these three points form an essential set and they have more than one Fermat-Weber points.
\end{remark}

\section{The Fermat-Weber points within treespaces}\label{sec:tree}

In this section we focus on the space of phylogenetic trees. 
An {\em equidistant tree} is a weighted rooted phylogenetic tree whose distance from
the root to each leaf is the same real number for all its leaves.  
Suppose $\Tn$ is the space of all equidistant trees with $N$
leaves, i.e., the set of leaves is $\{1, 2, \ldots, N\}$. For positive integer $N$, we denote by $[N]$ the set $\{1,2,\ldots,N\}$.

\begin{definition}       
The \textit{distance} $D_{ij}(T)$, between two leaves $i$ and $j$ in
$T \in \Tn$, is the length of a unique path between leaves $i$ and $j$.
 The \textit{distance matrix} of $T \in \Tn$ is a $N \times N$ matrix
 $D(T)=(D_{ij})_{1\le i,j\le N}$ 
       $\forall i,j$ $(1 \le i , j \leq N)$, where $N$ is the number of leaves in the tree $T$. The \textit{metric} of $T\in \Tn$, denoted by $D=(D_{ij})_{1\le i<j\le N}$, is a vector with $\binom{n}{2}$ entries.
\end{definition}

Distance matrices of equidistant trees in $\Tn$ satisfy the 
following strengthening of the triangle inequalities:
\begin{equation}
 \label{eq:ultrametric} \quad
D_{ik} \leq  {\rm max}(D_{ij} , D_{jk} )
\quad \hbox{for all} \,\,\, i,j,k \in [N]. 
\end{equation}
If \eqref{eq:ultrametric} holds, then the metric $D$ is called an {\em ultrametric}.
The set of all  ultrametrics contains the
ray $ \mathbb{R}_{\geq 0} {\bf 1}$ spanned by the all-one metric ${\bf 1}$,
defined by $D_{ij} = 1$ for $1 \leq i < j \leq N$.
The image of the set of ultrametrics in the quotient 
space $\mathbb{R}^{\binom{N}{2}} \!/ \mathbb{R} {\bf 1}$
is called the {\em space of ultrametrics}.  This is the image of ultrametrics
in the quotient space using the extrinsic metric, via the tropical metric \cite{BL}.

Suppose
we have a set of equidistant phylogenetic trees with $N$ leaves. They are
represented by their metrics $D$ in $\RR^{\binom{N}{2}}$ so that the space of equidistant
phylogenetic trees $\Tn$ with fixed number of leaves $N$ can be represented
by a union of polyhedra in $\RR^{\binom{N}{2}}$.  In the previous
sections, we have shown that there might be infinitely many
Fermat-Weber points of them. However, many of those points may not
correspond to any phylogenetic tree. In this section, for a sample of points in $\Tn$, we consider the set of their Fermat-Weber points within $\Tn$.
 
The spaces of equidistant phylogenetic trees $\Tn$ with $N$ leaves have $(2N-3)!!$
maximal polyhedra with dimension $N-2$ \cite{BHV,Sturmfels2004,LSTY}. The intersection of each maximal polyhedron and the polytope
of Fermat-Weber points is either empty or a polytope. Here we
investigate the set of equidistant phylogenetic trees such that they form an
essential set and there exists a unique equidistant phylogenetic tree that is a
Fermat-Weber point of them. 

We conducted simulations on 
Fermat-Weber points of a sample in $\Tn$ for $N = 4$.  We generated
$60$ equidistant phylogenetic trees with $N = 4$ leaves
using the {\tt R} package {\tt ape} \cite{ape}.   Due to the computational
time, we set $60$ as a sample size.
 Among these $60$ trees, we sampled randomly subsets of sizes
$4$, $5$ and $6$. For each subsample, we computed its Fermat-Weber
points within treespaces by using {\tt Maple}${}^\mathrm{TM}$ 2015
\cite{Maple2015}. We counted the maximal dimension of the set of 
Fermat-Weber points, which is a finite union of classical convex polytopes in
$\RR^{5}$ by Proposition \ref{prop:polytope}.  The result is shown in Table \ref{tab:maxdim}.

\begin{table}[h]
	\centering
	\begin{tabular}{|l|c|c|c|}
		\hline
		Sample size \textbackslash Max Dim. & $0$ & $1$ & $2$ \\
		\hline
		$4$ & $2$ & $7$ & $51$ \\
		\hline
		$5$ & $6$ & $15$ & $39$ \\
		\hline
		$6$ & $10$ & $21$ & $29$ \\
		\hline
	\end{tabular}
	\caption{The maximal dimension of the set of Fermat-Weber
          points within the treespace: Samples with size $4$, $5$, or $6$ phylogenetic trees with $4$ leaves.}\label{tab:maxdim}
\end{table}

\begin{example}\label{ex:uni}
	The polytope of Fermat-Weber points of the following four trees with $4$ leaves 
\begin{equation*}	
       \begin{split}
	&(32/109, 1, 124/673, 1, 32/109, 1), (1, 6/85, 1, 1, 203/445, 1),\\
	& (1, 1, 1, 310/783, 310/783, 1/265), (47/510, 1, 1, 1, 1, 125/151).
	\end{split}
\end{equation*}	
	is $2$-dimensional, while there is a unique Fermat-Weber point that corresponds  a phylogenetic tree, which is $(1,1,1,1,1,1)$.
\end{example}
We have the following conjecture based on our simulations.
\begin{conjecture}
	A sample in $\Tn$ like in Example \ref{ex:uni} is the only case of a unique tree as
        Fermat-Weber point. In other words, if a sample in $\Tn$ has a
        unique Fermat-Weber point, then 
        its unique Fermat-Weber point is the all-one vector ${\bf 1}$.
\end{conjecture}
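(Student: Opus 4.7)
The plan is to exploit the polyhedral structure in two layers. By Proposition \ref{prop:polytope}, the set $P$ of (ambient) Fermat-Weber points of the sample is a classical convex polytope in $\RR^{\binom{N}{2}}/\RR{\bf 1}$, while $\Tn$ is a polyhedral complex whose $(2N-3)!!$ maximal cones have dimension $N-2$ and all meet at the single point ${\bf 1}$ (the star tree). Hence $P\cap \Tn$ is a finite union of convex polytopes, and the conjecture asserts that this union is a singleton only when the singleton is ${\bf 1}$. The structural fact I would use throughout is that ${\bf 1}$ is the unique point of $\Tn$ contained in the closure of \emph{every} maximal cone; any other tree $p\in \Tn$ lies only in the cones corresponding to binary resolutions of $p$'s combinatorial type.

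Starting point: suppose $p\in P\cap \Tn$ is the unique intersection, and let $\Sigma(p)$ denote the set of maximal cones of $\Tn$ containing $p$. I would first establish a local dimensionality lemma: for each $C\in \Sigma(p)$, the polytope $P\cap C$ is zero-dimensional at $p$ only if the facets of $P$ active at $p$ jointly pin $p$ inside $\mathrm{aff}(C)$. Combining this over $C\in \Sigma(p)$, the claim becomes that the normal fan data of $P$ at $p$ must simultaneously slice every element of $\Sigma(p)$ to a point. For generic sample positions, Theorem \ref{thm:sum} shows these active facets come from the pairs $\sigma,\tau\colon [m]\to [n]$ maximizing the RHS of \eqref{equ:thm}. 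I would translate the "slice to a point" condition on each cone into a combinatorial requirement on how many distinct maximizing pairs $(\sigma,\tau)$ must activate at $p$ and how their supporting normals span the orthogonal complement of $\mathrm{aff}(C)$ inside $\RR^{\binom{N}{2}}/\RR{\bf 1}$.

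The next step would be a perturbation argument. Assuming $p\neq {\bf 1}$, there is some internal edge of the underlying tree of $p$ whose length is strictly positive, hence a tangent direction $\delta$ within some $C\in \Sigma(p)$ along which $p+\varepsilon\delta\in \Tn$ for small $\varepsilon>0$. I would show that unless the active facets of $P$ at $p$ form a very restrictive configuration, $\delta$ can be chosen to lie in the recession cone of $P$ at $p$ as well, producing a nearby point of $P\cap \Tn$ distinct from $p$. The remaining case — where $P$ happens to pin $p$ exactly against all admissible tangent directions in every $C\in \Sigma(p)$ — is the one that must be ruled out; I would attempt to do so by showing that this pinning implies the active maximizing pairs $(\sigma,\tau)$ from Theorem \ref{thm:sum} are symmetric under the full stabilizer of ${\bf 1}$ in the $S_N$-action on coordinates, which forces $p$ itself to be $S_N$-fixed, i.e.\ $p={\bf 1}$.

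The hard part is precisely this last implication: transferring symmetry of the active data to symmetry of the point $p$. There is no reason a priori that the sample $\vv_1,\dots,\vv_m$ is symmetric, so one cannot simply invoke invariance of the minimizer. The realistic route is a case analysis on the combinatorial types compatible with $\Sigma(p)$, using the explicit form of the FW inequalities \eqref{ine:lin} to check that any non-star $p$ leaves at least one facet of $P$ with slack in a direction tangent to $\Tn$. Because the number of combinatorial types grows rapidly, I would start with small $N$ (verifying $N=4,5$ by computer-assisted enumeration, matching the experiments of Table \ref{tab:maxdim}) before attempting a uniform proof; a key risk is that a subtle non-star configuration evades the perturbation, in which case the conjecture would fail for larger $N$ and the right statement would be a local one: $p={\bf 1}$ is the only \emph{interior} fixed point of the treespace realizing a zero-dimensional slice of $P$.
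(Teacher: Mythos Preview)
This statement is a \emph{conjecture}; the paper offers no proof, only the experimental evidence of Table~\ref{tab:maxdim} and Example~\ref{ex:uni}. There is therefore nothing in the paper to compare your proposal against.

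More to the point, the conjecture as formulated in Section~\ref{sec:tree} is false, and your perturbation strategy already fails on the counterexample. Work in a single maximal cone of $\mathcal{U}_4$, say the topology $(((1,2),3),4)$, where a tree is determined by $D_{12}=a$, $D_{13}=D_{23}=b$, $D_{14}=D_{24}=D_{34}=1$ with $0\le a\le b\le 1$. Take $\vv_1,\vv_0,\vv_2$ with $(a,b)=(0.1,0.5),(0.2,0.6),(0.3,0.7)$; then $\vv_i-\vv_j$ has entries in $\{0,\pm 0.1,\pm 0.2\}$ and one checks $d_{tr}(\vv_0,\vv_1)+d_{tr}(\vv_0,\vv_2)=0.1+0.1=0.2=d_{tr}(\vv_1,\vv_2)$. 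Thus $\vv_0$ is a Fermat--Weber point of $\{\vv_1,\vv_2\}$, and Lemma~\ref{lem:psuni} forces the sample $\{\vv_0,\vv_1,\vv_2\}\subset\mathcal{U}_4$ to have $\vv_0\neq\mathbf{1}$ as its \emph{unique} Fermat--Weber point. Here $P=\{\vv_0\}$ is a single ambient point, so your search for a feasible direction $\delta$ in the tangent cone of $P$ is vacuous from the outset. The resolution is that the paper's introduction states the conjecture for \emph{essential} samples (Definition~\ref{def:ess}), a hypothesis dropped in the formal statement; the example above is non-essential by construction. Your outline never invokes essentiality, so even for the intended version you are missing exactly the hypothesis that rules out this obstruction. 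The closing symmetry step---concluding that $p$ is $S_N$-fixed because the active $(\sigma,\tau)$ data is---is separately unjustified: the sample carries no $S_N$-symmetry, so there is no group action on the configuration that one could transfer to $p$.
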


\begin{remark}
	We have tried to conduct similar experiments for $N\ge 5$
        but the computational time was not feasible. The computational
        time complexity for our simulation study does not come from the number of polyhedra in the treespace, but comes from the difficulty of computing the polytope of Fermat-Weber points. See Section \ref{sec:dis} for details.
\end{remark}

\section{The $k$-ellipses under the tropical metric}\label{sec:elli}

Let $k$ be a positive integer. Given a sample of $k$ points $\vv_1,\vv_2,\ldots,\vv_k \in \RR^n/\RR{\bf 1}$, the locus of points $\uu \in \RR^n/\RR{\bf 1}$ such that
\begin{equation}\label{equ:locus}
	\mathop{\sum}_{i=1}^{k}{d_{tr}(\uu, \vv_i)}={\bf d}(\vv_1,\vv_2,\ldots,\vv_k)
\end{equation}
is the polytope of Fermat-Weber points of $\vv_1,\vv_2,\ldots,\vv_k$. In this section we generalize this locus and discuss the $k$-ellipses under the tropical metric.

\begin{definition}\label{def:elli}
	Let $\vv_1,\vv_2,\ldots,\vv_k \in \RR^n/\RR{\bf 1}$ and $a\ge {\bf d} (\vv_1,\vv_2,\ldots,\vv_k)$. Then the $k$-ellipse with foci $\vv_1,\vv_2,\ldots,\vv_k$ and mean radius $\frac{a}{k}$ is the follow set of points in $\RR^n/\RR{\bf 1}$:
	\begin{equation}
		\{\uu\in \RR^n/\RR{\bf 1}|\mathop{\sum}_{i=1}^{k}{d_{tr}(\uu, \vv_i)}=a \}.
	\end{equation}
\end{definition}

\begin{proposition}\label{prop:elli}
	Let $\vv_1,\vv_2,\ldots,\vv_k \in \RR^n/\RR{\bf 1}$ and $a\ge {\bf d} (\vv_1,\vv_2,\ldots,\vv_k)$. Then the $k$-ellipse with foci $\vv_1,\vv_2,\ldots,\vv_k$ and mean radius $\frac{a}{k}$ is a classical convex polytope in $\RR^{n-1}\simeq \RR^n/\RR{\bf 1}$.
\end{proposition}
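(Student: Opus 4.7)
The plan is to mirror the argument used for Proposition \ref{prop:polytope}, with the parameter $a$ playing the role that ${\bf d}(\vv_1,\ldots,\vv_k)$ played there. The essential input is that for each focus $\vv_i$ the tropical distance
\[ d_{tr}(\uu,\vv_i) = \max_{1 \le j, l \le n}(u_j - u_l + v_{i,l} - v_{i,j}) \]
is the pointwise maximum of finitely many affine functions of $\uu$. Consequently, for every choice of indices $(j_i,l_i) \in [n]^2$ with $1 \le i \le k$ we obtain the linear lower bound
\[ \sum_{i=1}^{k} d_{tr}(\uu, \vv_i) \ge \sum_{i=1}^{k}(u_{j_i} - u_{l_i} + v_{i,l_i} - v_{i,j_i}), \]
and at every $\uu$ this bound is attained by the correct choice, realizing the max independently in each summand.

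From this I would deduce that the set
\[ E(a) := \Bigl\{ \uu \in \RR^n/\RR{\bf 1} \,\Big|\, \sum_{i=1}^{k} d_{tr}(\uu, \vv_i) \le a \Bigr\} \]
is cut out by the finite family of linear inequalities
\[ \sum_{i=1}^{k}(u_{j_i} - u_{l_i}) \le a + \sum_{i=1}^{k}(v_{i,j_i} - v_{i,l_i}), \quad (j_i, l_i) \in [n]^2,\ 1 \le i \le k. \]
It is nonempty because any Fermat-Weber point of $\vv_1, \ldots, \vv_k$ lies in it, using the hypothesis $a \ge {\bf d}(\vv_1, \ldots, \vv_k)$. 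Fixing the representative $u_1 = 0$ to identify $\RR^n/\RR{\bf 1}$ with $\RR^{n-1}$, boundedness follows as in the proof of Proposition \ref{prop:polytope}: taking $(j_i, l_i) = (j, 1)$ for every $i$ bounds $u_j$ from above, and $(j_i,l_i)=(1, j)$ bounds $u_j$ from below. Hence $E(a)$ is a classical convex polytope in $\RR^{n-1}$.

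The main conceptual hurdle is reconciling this with Definition \ref{def:elli}, which describes the $k$-ellipse via the level set $\{\sum = a\}$ rather than the sublevel set $E(a)$. For a convex piecewise-linear function whose minimum value is strictly below $a$, the level set is merely the topological boundary of $E(a)$ and is not convex in general. I therefore read Proposition \ref{prop:elli} as the assertion that the filled-in $k$-ellipse, namely $E(a)$ in the notation above, is the classical convex polytope in question, in analogy with regarding a Euclidean ellipse together with the convex region it bounds. In the boundary case $a = {\bf d}(\vv_1,\ldots,\vv_k)$ the sum function is constant and minimal on $E(a)$, so the two notions coincide and the statement recovers Proposition \ref{prop:polytope} verbatim.
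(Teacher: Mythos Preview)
Your argument is essentially identical to the paper's: replace ${\bf d}(\vv_1,\ldots,\vv_k)$ by $a$ in the inequalities (\ref{ine:lin}) and observe that boundedness follows exactly as before. You are also right to flag the level-set versus sublevel-set discrepancy; the paper's own proof silently passes to the inequalities $\sum(\cdots)\le a$, i.e.\ it proves that the filled ellipse $E(a)$ is a polytope, so your reading coincides with what is actually shown there.
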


\begin{proof}
	The proof is very similar to the one of Proposition \ref{prop:polytope}. Note that we can still eliminate the parameters $c_i$, and now the inequalities in (\ref{ine:lin}) become
	\begin{equation}\label{ine:elli}
		\sum_{i=1}^{m}{(x_{j_i}-x_{k_i}+v_{i,k_i}-v_{i,j_i})}\le a.
	\end{equation}
	So this $k$-ellipse is also a polyhedron in $\RR^{n-1}$ and for the same reason it is bounded.
\end{proof}

\begin{example}
	We consider Example \ref{example:threepoints} again.  Let
        $\vv_1 = (0, 0, 0), \vv_2= (0, 3, 1), $\\$ \vv_3= (0, 2, 5)$. Then by Theorem \ref{thm:sum}, we have ${\bf d} (\vv_1,\vv_2,\vv_3)=(0+3+5)-(0+1+0)=7$. We consider $a=8,10,50,100$. Figure \ref{ex:ellipse} shows the $3$-ellipses with foci $\vv_1,\vv_2,\vv_3$ and mean radius $\frac{a}{3}$.
\end{example}

\begin{figure}[h]
	\centering
	\begin{minipage}[t]{0.45\textwidth}
		\centering
		\includegraphics[width=0.45\textwidth]{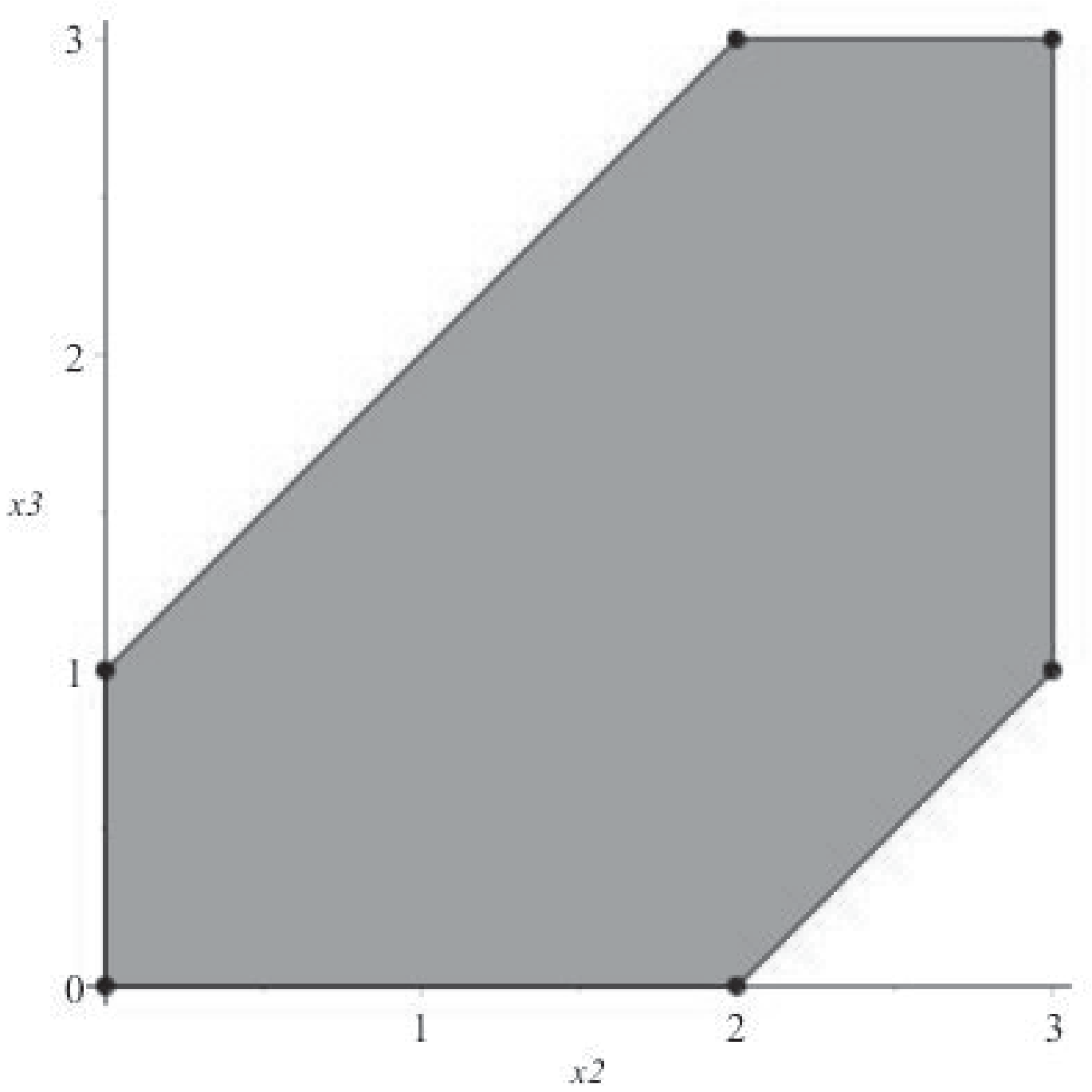}
		\centerline{The $3$-ellipse with $a=8$. It is a hexagon.}
	\end{minipage}
	\hspace{1cm}
	\begin{minipage}[t]{0.45\textwidth}
		\centering
		\includegraphics[width=0.45\textwidth]{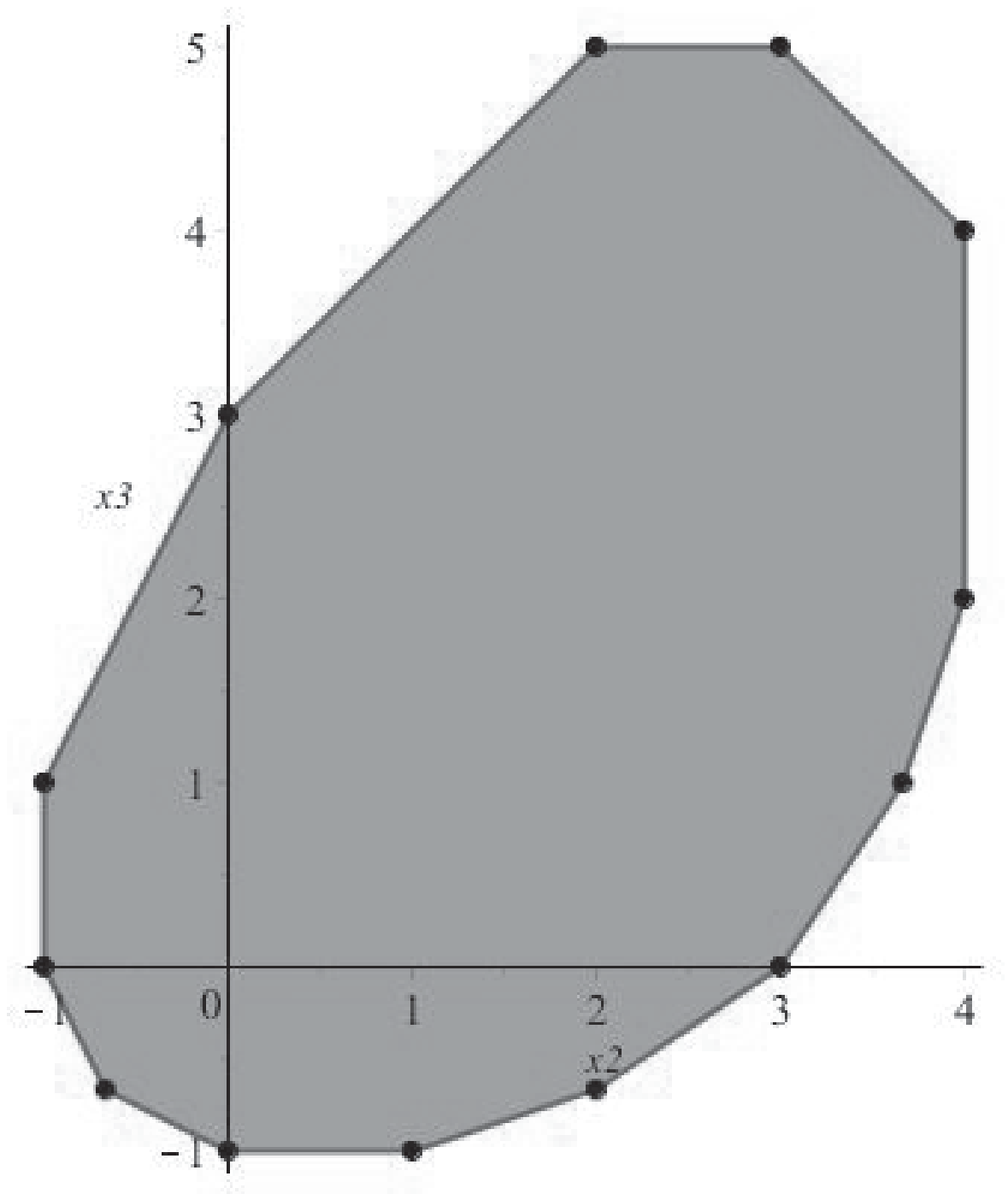}
		\centerline{The $3$-ellipse with $a=10$. It is a $13$-gon.}
	\end{minipage}
	
	\begin{minipage}[t]{0.4\textwidth}
		\centering
		\includegraphics[width=0.45\textwidth]{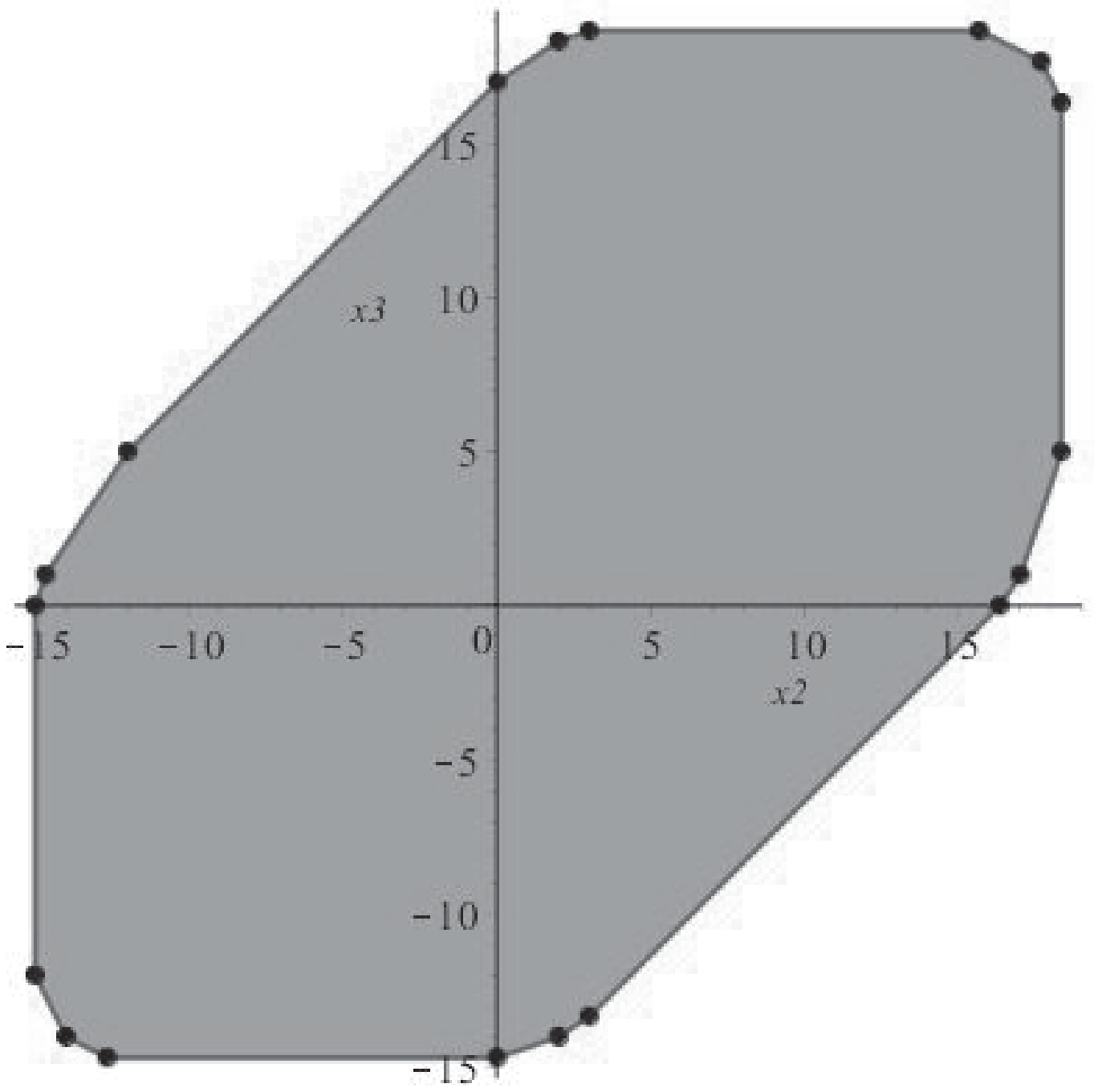}
		\centerline{The $3$-ellipse with $a=50$. It is an $18$-gon.}
	\end{minipage}
	\hspace{2cm}
	\begin{minipage}[t]{0.4\textwidth}
		\centering
		\includegraphics[width=0.45\textwidth]{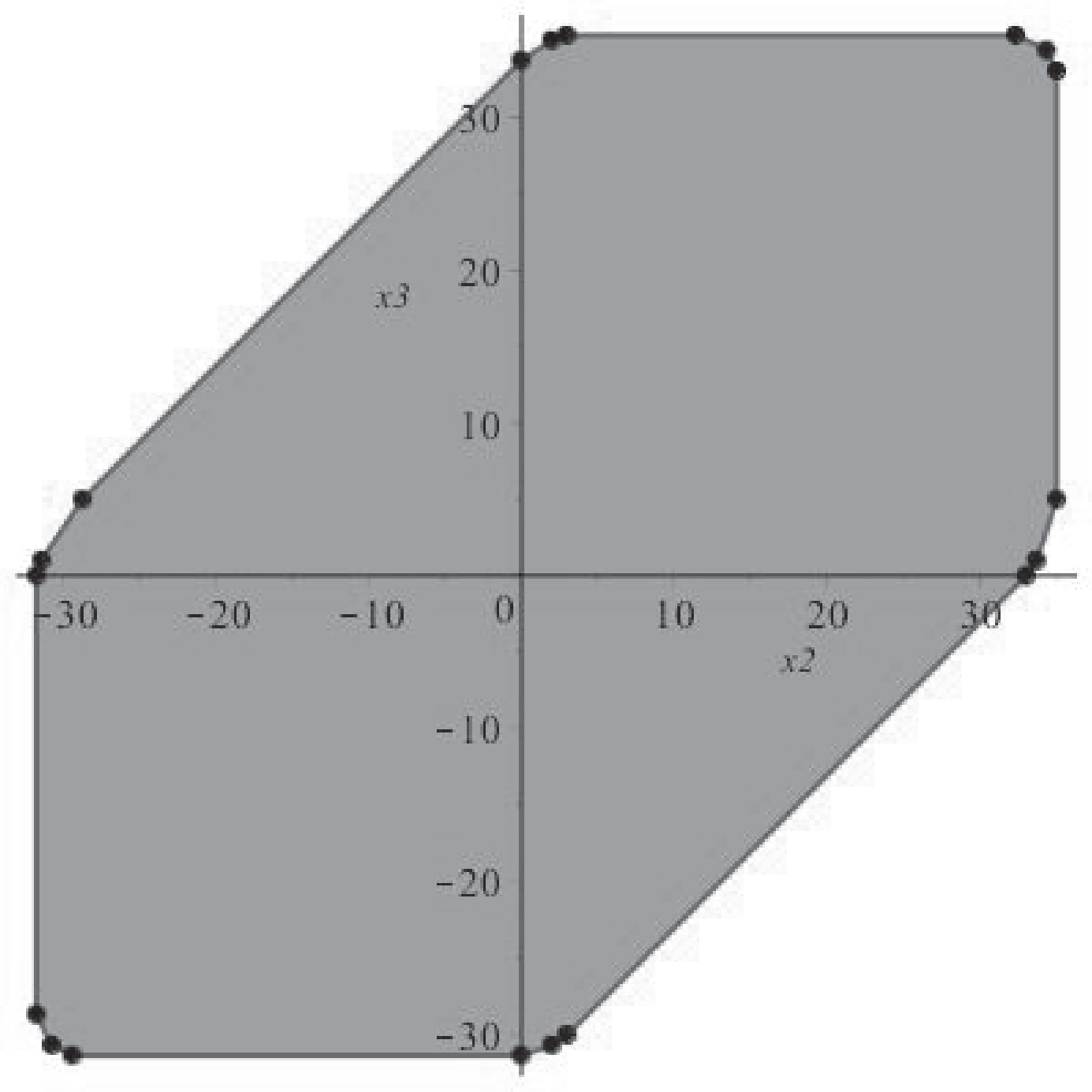}
		\centerline{The $3$-ellipse with $a=100$. It is an $18$-gon.}
	\end{minipage}
	\caption{Four $3$-ellipses with foci $\vv_1,\vv_2,\vv_3$ and different mean radii.}\label{ex:ellipse}
\end{figure}

\section{Computing the Fermat-Weber points under the tropical metric}\label{sec:dis}

In this section we explain our method of computing the set of all Fermat-Weber points of a sample and discuss some computational issues. 
Suppose points in a sample are $\vv_1,\vv_2,\ldots,\vv_m \in \RR^n/\RR{\bf
  1}$. Then our method consists of two steps:
\begin{enumerate}[(a)]
	\item\label{stepa} to compute ${\bf d}={\bf d} (\vv_1,\vv_2,\ldots,\vv_m)$;
	\item\label{stepb} given $d$, to compute the set of Fermat-Weber points of $\vv_1,\vv_2,\ldots,\vv_m$.
\end{enumerate}

One way to compute step \eqref{stepa} is to use Theorem \ref{thm:sum}. But then we have to compute all possible functions $\sigma,\tau:
[m]\to [n]$ such that $\sigma([m])=\tau([m])$ as multisets. The number of such functions is $n^m$, which means the time complexity of this step would be exponential in $m$. In practice we use a method of linear programming, which minimizes $d$ such that all inequalities in (\ref{ine:1dis}) and $d\ge \sum_{i=1}^{m}{c_i}$ are feasible simultaneously.

However, for step \eqref{stepb}, even if we have $d$, there are still
many inequalities that define the polytope of Fermat-Weber
points. In the proof of Proposition
\ref{prop:polytope}, if we eliminated the parameters $c_i$, then there
are $\binom{n}{2}^{m}$ inequalities in (\ref{ine:lin}); otherwise we
may keep the parameters $c_i$ and get another polytope in the ambient
space $\RR^{n+m-1}$ and then project it to $\RR^{n-1}$, but for each
$c_i$ there are still $2\binom{n}{2}=n(n-1)$ inequalities, so we need
$mn(n-1)+1$ inequalities to define this polytope.  
From the computations
with {\tt polymake} \cite{polymake}, there seem to be some redundant
inequalities but we do not know an efficient method for step
\eqref{stepb}.  Note that we used {\tt polymake} since this software
is one of the most efficient software to deal with polyhedral
geometry.
In this paper, the time complexity of our computation of all
Fermat-Weber points from a given sample is not very
efficient.  But still we do not know the computational time
complexity, i.e., finding a tropical Fermat-Weber point of the given
sample is not known.

\begin{question} 
What is the time complexity to compute the set of tropical Fermat-Weber points
of a sample of $m$ points in $\RR^n/\RR{\bf 1}$ in $m$ and $n$?  Is there a
polynomial time algorithm to compute the vertices of the polytope of tropical Fermat-Weber points
of a sample of $m$ points in $\RR^n/\RR{\bf 1}$ in $m$ and $n$?
\end{question}

Note that the linear system which we present here has many redundant
inequalities because we simply followed the definition, without any
simplification. This leads to the following question:
\begin{question}
In terms of polyhedral geometry, what are the facets defining
a polytope for all tropical  Fermat-Weber points of a given sample
as well as the number of the facets of the polytope, i.e., the number of the 
minimal set of inequalities needed to define the set of all 
tropical Fermat-Weber points of a given sample?
\end{question}

As we have discussed above, it is very hard to compute the set of all
tropical Fermat-Weber points over treespaces because of its computational time.  At this
moment, we can compute the Fermat-Weber points on treespaces of at most
$4$ leaves.  
As future research projects, it will
be interesting to compare the set of all
tropical Fermat-Weber
points with summary/consensus trees,
such as the majority-rules consensus tree as well as the Fr\'{e}chet mean
over treespaces.

\section*{Acknowledgment}
B.L. is partly supported Coleman Fellowship for this work.  R.Y. is
partially supported NSF Division of Mathematical Sciences:
CDS\&E-MSS program 1622369. The authors also would like to thank Bernd Sturmfels for his comments on this work.

\bibliographystyle{siamplain}
\bibliography{refs}
\end{document}